\DeclareRobustCommand{\coprod}{\mathop{\text{\fakecoprod}}}
\newcommand{\fakecoprod}{%
  \sbox0{$\prod$}%
  \smash{\raisebox{\dimexpr.9625\depth-\dp0}{\scalebox{1}[-1]{$\prod$}}}%
  \vphantom{$\prod$}%
}
\newtheorem{theorem}{Theorem}[section]
\newtheorem{theorem*}{Theorem}
\newtheorem{proposition}[theorem]{Proposition}
\newtheorem{corollary}[theorem]{Corollary}
\theoremstyle{definition}
\renewcommand{\phi}{\varphi}
\renewcommand{\H}{\mathrm{H}}
\newcommand{\e}{\mathrm{e}}
\newcommand{\CC}{\mathbb{C}}
\newcommand{\DD}{\mathbb{D}}
\newcommand{\NN}{\mathbb{N}}
\newcommand{\PP}{\mathbb{P}}
\newcommand{\RR}{\mathbb{R}}
\newcommand{\TT}{\mathbb{T}}
\newcommand{\ZZ}{\mathbb{Z}}
\newcommand{\C}{\mathcal{C}}
\newcommand{\D}{\mathcal{D}}
\newcommand{\M}{\mathcal{M}}
\newcommand{\Mor}{\operatorname{Mor}}
\newcommand{\Emb}{\operatorname{Emb}}
\newcommand{\colim}{\operatorname{colim}}
\newcommand{\B}{\operatorname{B}\!}
\newcommand{\U}{\operatorname{U}}
\newcommand{\Diff}{\operatorname{Diff}}
\newcommand{\BDiff}{\B\Diff}
\newcommand{\Spiffc}{\operatorname{Spiff}^c}
\newcommand{\BSpiffc}{\B\Spiffc}
\title{\bf Stable diffeomorphism groups\\ of~4-manifolds}
\author{Markus Szymik}
\date{June 2008}
\begin{document}

\maketitle

\begin{abstract}%
\noindent
  A localisation of the category of~$n$-manifolds is introduced by
  formally inverting the connected sum construction with a
  chosen~$n$-manifold~$Y$.  On the level of automorphism groups, this
  leads to the stable diffeomorphism groups of~$n$-manifolds. In
  dimensions~0 and~2, this is connected to the stable homotopy groups
  of spheres and the stable mapping class groups of Riemann
  surfaces. In dimension~4 there are many essentially different
  candidates for the~$n$-manifold~$Y$ to choose from. It is shown that
  the Bauer--Furuta invariants provide invariants in the
  case~$Y=\overline{\CC P}^2$, which is related to the birational
  classification of complex surfaces. This will be the case for
  other~$Y$ only after localisation of the target category. In this
  context, it is shown that the~$K3$-stable Bauer--Furuta invariants
  determine the~$S^2\!\times\!S^2$-stable invariants.
\end{abstract}

\thispagestyle{empty}


\section*{Introduction}

One of the main objects of geometric topology is to
classify~$n$-dimensional manifolds up to diffeomorphism -- and their
diffeomorphisms. However, the study of diffeomorphism groups has
turned out to be difficult so far, and one is tempted to address
simpler but related problems first. This has led, for example, to the
introduction of the block diffeomorphism groups, which are relatively
accessible via surgery theory, see \cite{WeissWilliams:Survey} for a
survey. In this work, the focus will be on a different way of
simplifying things: localisation -- in the sense of inverting
operations which need not be invertible originally.  In contrast to
arithmetic localisation techniques, which long have their place in
geometric topology, see~\cite{Sullivan}, we will consider inverting
the operation of connected summation with a chosen~$n$-dimensional
manifold~$Y$.

In the first four sections, after describing the general setup, I will
discuss examples in low dimensions~\hbox{$n=0$} and~\hbox{$n=2$},
which show that this has led to interesting mathematics already,
related to the stable homotopy groups of spheres and the stable
mapping class groups of Riemann surfaces.

The final four sections concentrate on stabilisation of~4-manifolds,
with particular emphasis on the question of how the Bauer--Furuta
invariants relate to this. An essential feature in this dimension is
that there are several reasonable choices of ``directions''~$Y$ in
which to stabilise. Section~\ref{sec:directions} discusses some of
them. The cases~$Y=\overline{\CC P^2}$,~\hbox{$Y=S^2\!\times\!S^2$},
and~\hbox{$Y=K3$} will play a r\^ole later on.

It will turn out that the Bauer--Furuta invariants allow for the
definition of {\it stable} characteristic classes for families
of~4-manifolds. The reader may find it useful to keep the following
analogy with the corresponding approach for vector bundles in mind:
The universal Chern classes live in~$\H^*(\B\U(n))$, where~$\B\U(n)$
is the classifying space for~$n$-dimensional complex vector bundles,
and the fact that Chern classes do not change when a trivial bundle is
added means that there are stable universal Chern classes living
in~$\H^*(\B\U(\infty))$, where~$\B\U(\infty)$ is the classifying space
for stable complex vector bundles. However, caution should be taken
to not to confuse the approach in this article with the stabilisation
process by taking products of manifolds with~$\RR$ as in~\cite{Mazur},
which is closer in spirit to the Chern class picture, but changes the
dimension of the manifolds involved; the connected sum construction
preserves dimension while possibly changing the rank of the
intersection form.

The stable Bauer--Furuta invariants are worked out in
Section~\ref{sec:families} in the case of stabilisation with respect to~\hbox{$Y=\overline{\CC P^2}$},
which is simplified by the fact that the Bauer--Furuta invariant
of~$\overline{\CC P^2}$ is the identity, leading to the existence of a
stable universal characteristic class in the group
\begin{displaymath}
  \pi^0_\TT(\BSpiffc(X\#\infty\overline{\CC
    P}^2,y)^{\lambda(\sigma_X)}),    
\end{displaymath}
see Theorem~\ref{thm:CP2invariants}, where also the issue of
uniqueness is discussed. For general~$Y$, one will have to localise
the target category of the invariants as well. This done in
Section~\ref{sec:localisation}, see Theorem~\ref{thm:stableinvariants}
there, while the final section discusses the
examples~\hbox{$Y=S^2\!\times\!S^2$} and~$Y=K3$. These two cases turn
out to be related in the sense that the~$K3$-stable invariants
determine the~\hbox{$S^2\!\times\!S^2$}-stable invariants, see
Theorem~\ref{thm:K3determinesTorus}. This would be a trivial result
if~\hbox{$S^2\!\times\!S^2$} were a connected summand of~$K3$; but it
is not.


\section{Based manifolds}

The aim of this section is to give a description of a category of
manifolds where a common construction, connected summation with
another manifold, is well-defined -- not up to diffeomorphism, but on
the nose. There will be reasons to restrict the topology of the
manifolds involved. For example, one might only want to consider
connected or simply-connected manifolds. And there will be reasons to
consider manifolds with orientations, even in the generalised sense of
spin structures or framings. However, it should be clear how to adapt
the following to the specified contexts if necessary.

We will consider closed manifolds of a fixed
dimension~$d$. Let~$\M_{d}$ be the category of pairs~$(X,x)$,
where~$X$ is closed~$d$-manifold and
\begin{displaymath}
  x\colon\DD^{d}\longrightarrow X 
\end{displaymath}
is an embedding of the closed~$d$-disk into~$X$. The space of
embeddings~$\DD^{d}\rightarrow X$ is homotopy equivalent to the frame
bundle of~$X$. Thus, these embeddings~$x$ can be thought of as
manifolds with a framed base point~$x(0)$. However, it will be
important to have an actual embedding as part of the structure. The
morphisms in~$\M_{d}$ from~$(X,x)$ to~$(X',x')$ are the diffeomorphisms
from~$X$ to~$X'$ which send~$x$ to~$x'$ in the sense that the triangle
\begin{center}
  \mbox{ 
    \xymatrix{
      & \DD^d\ar[dl]_x\ar[dr]^{x'} & \\
      X\ar[rr]_\cong && X'
    } 
  }
\end{center}
commutes. By construction, the category~$\M_{d}$ is a groupoid. This
will hold for all categories considered here. The class of objects
could be considered with the topology from the embedding spaces, but
this will not be done here. However, the automorphism group of~$(X,x)$
in~$\M_{d}$ can also be considered with its natural topology, and this
will be done here. That automorphism group is the group~$\Diff(X,x)$
of all diffeomorphisms of~$X$ which fix~$x$.

Let us compare the groupoid~$\M_{d}$ of based manifolds with the
groupoid of all closed~$d$-manifolds and diffeomorphisms. If two
objects~$(X,x)$ and~$(X',x')$ are isomorphic in~$\M_{d}$, then~$X$
and~$X'$ are diffeomorphic. The converse holds if~$X$~(and therefore
also~$X'$) is connected. 

If~$X$ is connected, this automorphism group in~$\M_{d}$ is only a
frame bundle away from the group~$\Diff(X)$ itself, in the sense that
there is a fibration sequence
\begin{displaymath}
  \Diff(X,x)\longrightarrow\Diff(X)\longrightarrow\Emb(\DD^d,X),
\end{displaymath}
and that~$\Emb(\DD^d,X)$ is homotopy equivalent to the frame bundle
of~$X$. For example, take~$X=S^d$. Then the composition
\begin{displaymath}
  SO(d+1)\longrightarrow\Diff(S^d)\longrightarrow\Emb(\DD^d,S^d)
\end{displaymath}
is an equivalence. In fact, for~$d=1,2,3$ both arrows are
equivalences, and the group~$\Diff(S^2,x)$ is contractible, see~\cite{Smale}
and~\cite{Hatcher}. This is false for~\hbox{$d\geqslant
  5$}~\cite{Milnor}, and unknown (at present) for~\hbox{$d=4$}.

To sum up, the category~$\M_{d}$ of based manifolds is sufficiently
close to the category of unbased manifolds that their difference is
under control. It is time to see what the base is good for.


\section{Connected sums}

Let us fix a closed~$d$-manifold~$Y$ and an embedding
\begin{displaymath}
  (y',y)\colon \DD^{d}+\DD^{d}\longrightarrow Y.  
\end{displaymath}
If~$Y$ is connected, the choice of~$y$ and~$y'$ will not
matter and will be omitted from the notation. In any case,
this yields an endofunctor~$F_{Y}$ of~$\M_{d}$ which is given
on objects by connected summation:
\begin{displaymath}
  F_{Y}(X,x)=(X\# Y, y),
\end{displaymath}
where the connected sum~$X\# Y$ is constructed from~$X\backslash x(0)$
and~$Y\backslash y'(0)$ by the usual identifications. (It is here,
where the actual embedding is used.) On morphisms, the functor~$F_{Y}$
acts by extending a diffeomorphism of~$X$ which fixes~$x$ over~$X\# Y$
by the identity on~$Y$.

The question arises whether this functor is invertible or not. Of
course, it will rarely be invertible in the strict sense: here and in
the following, functors will be considered only up to natural
isomorphism.

\begin{proposition}\label{prop:sphere}
  The functor~$F_{Y}$ is invertible if and only if~$Y$ is a
  homotopy sphere.
\end{proposition}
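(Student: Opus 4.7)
The plan is to prove the two implications separately. For the ``if'' direction, I would assume $Y=\Sigma$ is a homotopy $d$-sphere and produce an explicit quasi-inverse $F_{\Sigma'}$, where $\Sigma'$ is a homotopy $d$-sphere with $\Sigma\#\Sigma'\cong S^d$; such a $\Sigma'$ exists because the oriented connected-sum monoid of homotopy $d$-spheres is a group (trivially for $d\leqslant 3$, and by Kervaire--Milnor for $d\geqslant 5$, with orientation reversal as inverse). The composition $F_{\Sigma'}\circ F_\Sigma$ sends $(X,x)$ to $(X\#\Sigma\#\Sigma',y')$, and I would check that fixing a diffeomorphism $\Sigma\#\Sigma'\cong S^d$ once and for all extends by the identity on the $X$--part to a based diffeomorphism with $(X\#S^d,y')\cong(X,x)$, which is moreover natural in $(X,x)$ because the chosen identifications are supported away from the disk of $X$. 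The opposite composite $F_\Sigma\circ F_{\Sigma'}$ is treated symmetrically.

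For the converse, suppose $F_Y$ admits a quasi-inverse $G$. Evaluating the natural isomorphism $F_Y\circ G\cong\id$ at the based sphere $(S^d,x_0)$ and writing $(Z,z):=G(S^d,x_0)$ produces a based diffeomorphism $Z\# Y\cong S^d$. The task then reduces to showing that any based connected summand of a sphere is itself a homotopy sphere. For $d\geqslant 3$, the Seifert--van Kampen theorem applied to the standard open cover of $Z\# Y$ gives $\pi_1(Z)\ast\pi_1(Y)\cong\pi_1(S^d)=1$, forcing both factors to be simply connected, while the Mayer--Vietoris sequence yields $\tilde{\H}_*(Y)\oplus\tilde{\H}_*(Z)\cong\tilde{\H}_*(S^d)$ in the range $0<*<d$. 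Hence $Y$ has the reduced homology of $S^d$, and a map $S^d\to Y$ representing a generator of $\H_d(Y;\ZZ)$ is a homotopy equivalence by Whitehead's theorem. The low-dimensional cases $d\leqslant 2$ follow directly from the classification of manifolds; for $d=2$, for instance, genus is additive under connected sum, so a summand of $S^2$ must itself have genus zero.

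The main obstacle is the careful management of the framed disk data throughout the ``if'' direction: since $F_\Sigma$ depends on the specific embedding $(y',y)\colon\DD^d+\DD^d\to\Sigma$, the natural isomorphism $F_{\Sigma'}\circ F_\Sigma\cong\id$ has to respect this structure as $(X,x)$ varies through diffeomorphisms fixing the disk of $X$. The verification is a support argument -- the chosen diffeomorphism $\Sigma\#\Sigma'\cong S^d$ extends by the identity over the complement of a tubular neighborhood -- but the bookkeeping of basepoints and framings is the most delicate part of the proof. A secondary concern is dimension $d=4$, where it is open whether the monoid of oriented smooth homotopy $4$-spheres actually forms a group: the ``only if'' direction goes through unchanged in that dimension, but the construction of an inverse $\Sigma'$ to a given smooth homotopy $4$-sphere $\Sigma$ depends on an input that is not currently available, so in dimension $4$ the proposition should be read as needing this (conjectural) group structure.
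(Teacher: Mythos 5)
Your argument is correct and follows essentially the same route as the paper's proof: in one direction, connected summation with a homotopy sphere $Z$ satisfying $Y\#Z\cong S^d$ gives the inverse functor, and in the other, evaluating a quasi-inverse at $S^d$ exhibits $Y$ as a connected summand of a sphere, which is a homotopy sphere by Milnor's result (which you reprove via van Kampen, Mayer--Vietoris and Whitehead, where the paper simply cites Milnor). Your caveat about $d=4$ is a genuine point that the paper's one-line claim ``there is another homotopy sphere $Z$ such that $Y\#Z\cong S^d$'' suppresses: by the Mazur swindle together with $\Diff(S^3)\simeq O(4)$, a homotopy $4$-sphere admits a connected-sum inverse only if it is diffeomorphic to $S^4$, so in dimension $4$ the ``if'' direction is equivalent to the open smooth Poincar\'e conjecture, and the proposition is only unconditionally correct there if ``homotopy sphere'' is read as ``standard (or twisted) sphere.''
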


\begin{proof}
  Let~$Y$ be an homotopy sphere. Then there is another
  homotopy sphere~$Z$ such that~$Y\#Z\cong S^d$. Connected
  sum with~$Z$ gives the inverse.
  
  Let~$F_Y$ be invertible. Then there is a manifold~$X$ such
  that~$X\#Y\cong S^d$. It follows that~$X$ and~$Y$ are homotopy
  spheres. See~\cite{Milnor:Sommes}.
\end{proof}

In any case, if the functor~$F_{Y}$ is not invertible, it
can be formally inverted, and this it what will be done
next.


\section{Formally inverting endofunctors}\label{sec:endofunctors}

Given a category~$\C$ with an endofunctor~$F$, there is a universal
category~$\C[F^{-1}]$ with an autofunctor, denoted by~$\overline F$,
and a functor~\hbox{$\C\rightarrow\C[F^{-1}]$} compatible with the
functors~$F$ and~$\overline F$, which is universal (initial) among
such functors: if~$(\D,G)$ is another category with an autofunctor,
and if~$\Phi\colon(\C,F)\rightarrow(\D,G)$ is a functor compatible
with~$F$ and~$G$, then there is a unique functor
$\phi\colon(\C[F^{-1}],\overline F)\rightarrow(\D,G)$ such that the
diagram
\begin{center}
  \mbox{ \xymatrix{ (\C,F)\ar[d]_\Phi\ar[r] &
      (\C[F^{-1}],\overline F)\ar@{-->}[dl]^\phi \\
      (\D,G) } }
\end{center}
commutes.

There are several (naturally equivalent) constructions of $\C[F^{-1}]$
available. In one of them, the objects are the pairs~$(C,n)$,
where~$C$ is an object of~$\C$ and~$n$ is an integer. The set of
morphisms from~$(C,n)$ to~$(C',n')$ is
\begin{equation}\label{eq:defmor}
  \underset{m}{\colim}\Mor_{\C}(F^{m+n}C,F^{m+n'}C'),
\end{equation}
where the colimit is formed using the maps induced
by~$F$. For example, the identity of~$C$ represents a natural
isomorphism
\begin{equation}\label{eq:iso}
  (C,1)\cong(FC,0)
\end{equation}
in~$\C[F^{-1}]$. The functor from~$\C$ to~$\C[F^{-1}]$ sends~$C$
to~$(C,0)$, and the functor~$F$ on~$\C$ extends to~$\C[F^{-1}]$ by
acting on the first component. An isomorphism like~(\ref{eq:iso}) shows
that this extension of~$F$ is naturally isomorphic to the functor
$\overline F$ on~$\C[F^{-1}]$ which sends~$(C,n)$ to~$(C,n+1)$. As the
latter is clearly invertible, so is the former. As for the universal
property,~$\phi$ must be defined on objects by~$\phi(C,n)=G^n\Phi
C$. If~$c\colon F^{m+n}C\rightarrow F^{m+n'}C'$ represents a morphism
$[c]\colon(C,n)\rightarrow(C',n')$, then~$\phi[c]$ is to be defined
such that~$G^m\phi[f]=\Phi(f)$.  See~\cite{Margolis} for all this in a
similar context.

A different model for the category~$\C[F^{-1}]$ is a special case of Quillen's
construction, see~\cite{Grayson:QuillenII}, namely the
category~$\NN^{-1}\mathcal C$, where the monoid~$\NN$ is interpreted
as a (discrete) monoidal category, acting on~$\mathcal C$ via~$F$. As
it turns out, this model is literally the same as the Grothendieck
construction in the case of the
diagram~\hbox{$\mathcal{C}\overset{F}{\rightarrow}
  \mathcal{C}\overset{F}{\rightarrow}
  \mathcal{C}\overset{F}{\rightarrow}\dots$} of categories.  As
Thomason proved, in~\cite{Thomason}, there is an equivalence
\begin{displaymath}
  \B\mathcal C[F^{-1}]
  \simeq
  \operatorname{hocolim}(
  \B\mathcal{C}\overset{\B F}{\longrightarrow}
  \B\mathcal{C}\overset{\B F}{\longrightarrow}
  \B\mathcal{C}\overset{\B F}{\longrightarrow}
  \dots), 
\end{displaymath}
which also implies that this construction is well-behaved with respect
to homology.

Two objects~$C$ and~$C'$ of~$\mathcal C$ are called {\it~$F$-stably
  isomorphic} if their images in~$\mathcal C[F^{-1}]$ are isomorphic;
this is the case if and only if there is a non-negative integer~$n$
such that~$F^nC$ and~$F^nC'$ are isomorphic in~$\mathcal C$. Two
objects~$C$ and~$C'$ of~$\mathcal C$ are called~{\it$F$-stably
  equivalent} if there are non-negative integers~$n$ and~$n'$ such
that~$F^nC$ and~$F^{n'}C'$ are isomorphic in~$\mathcal C$. Clearly,
two~$F$-stably isomorphic objects are~$F$-stably equivalent, and the
converse need not hold. The importance of the notion of equivalence
lies in the following fact.

\begin{proposition}\label{prop:aut}
  The isomorphism type of the automorphism group of~$C$ in~$\mathcal
  C[F^{-1}]$ depends only on its~$F$-stable equivalence class.
\end{proposition}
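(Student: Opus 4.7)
My plan is to use the natural isomorphism $(C,1)\cong(FC,0)$ of~(\ref{eq:iso}) together with the fact that the extended functor $\overline F$ is an autoequivalence of $\mathcal C[F^{-1}]$ to show that passing from $C$ to $FC$ does not change the isomorphism type of the automorphism group in the localised category; once this is in hand, the statement reduces to the obvious observation that isomorphic objects have isomorphic automorphism groups.

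First I would note that, since $\overline F$ is invertible in $\mathcal C[F^{-1}]$, it induces for every object $X$ of $\mathcal C[F^{-1}]$ a group isomorphism
\begin{displaymath}
  \overline F_*\colon\operatorname{Aut}_{\mathcal C[F^{-1}]}(X)\stackrel{\cong}{\longrightarrow}\operatorname{Aut}_{\mathcal C[F^{-1}]}(\overline F X).
\end{displaymath}
Applying this with $X=(C,0)$ and composing with conjugation by the isomorphism $(C,1)\cong(FC,0)$ of~(\ref{eq:iso}) produces an isomorphism $\operatorname{Aut}_{\mathcal C[F^{-1}]}(C)\cong\operatorname{Aut}_{\mathcal C[F^{-1}]}(FC)$. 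Iterating, one obtains $\operatorname{Aut}_{\mathcal C[F^{-1}]}(C)\cong\operatorname{Aut}_{\mathcal C[F^{-1}]}(F^nC)$ for every $n\geqslant0$, and of course the analogous statement for $C'$.

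Next I would appeal to the definition of $F$-stable equivalence: there exist non-negative integers $n,n'$ and an isomorphism $F^nC\cong F^{n'}C'$ in $\mathcal C$. The image of this isomorphism under the canonical functor $\mathcal C\rightarrow\mathcal C[F^{-1}]$ gives an isomorphism between $F^nC$ and $F^{n'}C'$ in $\mathcal C[F^{-1}]$, hence (by conjugation) an isomorphism of their automorphism groups there. Chaining the three isomorphisms
\begin{displaymath}
  \operatorname{Aut}_{\mathcal C[F^{-1}]}(C)
  \cong\operatorname{Aut}_{\mathcal C[F^{-1}]}(F^nC)
  \cong\operatorname{Aut}_{\mathcal C[F^{-1}]}(F^{n'}C')
  \cong\operatorname{Aut}_{\mathcal C[F^{-1}]}(C')
\end{displaymath}
yields the result.

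There is not really a hard step here; the only point that requires a little care is the passage from~(\ref{eq:iso}), which is an isomorphism between $(C,1)$ and $(FC,0)$ rather than between $(C,0)$ and $(FC,0)$, to the comparison of automorphism groups of $C$ and $FC$. This is precisely what the autoequivalence $\overline F$ is for. Alternatively, one can give a direct proof from the colimit formula~(\ref{eq:defmor}) for morphisms, which shows $\operatorname{End}_{\mathcal C[F^{-1}]}(C)=\colim_{m}\operatorname{Mor}_{\mathcal C}(F^mC,F^mC)$, and this colimit is manifestly unchanged (up to canonical isomorphism) by replacing $C$ with $F^nC$ or by transporting along an isomorphism $F^nC\cong F^{n'}C'$ in $\mathcal C$.
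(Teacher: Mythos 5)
Your proposal is correct and takes essentially the same approach as the paper: the paper's proof reduces the statement to showing that $C$ and $FC$ have isomorphic automorphism groups in $\mathcal C[F^{-1}]$ and reads this off from the colimit formula~(\ref{eq:defmor}), which is exactly the ``alternative direct proof'' you mention at the end. Your main packaging via the autoequivalence $\overline F$ and the isomorphism~(\ref{eq:iso}) is just a more explicit way of organising the same shift-by-one argument, so there is nothing substantively different here.
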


\begin{proof}
  It suffices to prove that~$C$ and~$FC$ have isomorphic automorphism
  groups in~$\mathcal C[F^{-1}]$. But that follows immediately from
  the definition~(\ref{eq:defmor}).
\end{proof}


\section{Stable Diffeomorphism Groups}

The abstract construction of the previous section can now be applied
to the category~$\M_{d}$ with the
endofunctor~$F_Y$. Let~$\M_d[Y^{-1}]$ be the category which is
obtained from~$\M_{d}$ by formally inverting~$F_{Y}$.  Given any
object~$(X,x,n)$ in~$\M_d[Y^{-1}]$, its automorphism group is
\begin{equation}\label{eq:colimit}
  \Diff(X\#\infty Y)\;\stackrel{\text{\tiny def}}{=}\;
  \underset{n}{\colim}\Diff(X\#nY),
\end{equation}
where again the embeddings have been suppressed from the
notation. See~(\ref{eq:interpretation}) below for a geometric
interpretation of the maps involved in the colimit on the level of
classifying spaces. The groups~(\ref{eq:colimit}) are the {\it stable
  diffeomorphism groups} of~$X$ with respect to~$Y$ to which the title
refers. As has been pointed out in Proposition~\ref{prop:aut}, up to
isomorphism, the $Y$-stable diffeomorphism group of $X$ depends only
on the $Y$-stable equivalence class of $X$.

In the rest of this section, manifolds of dimension 0 and 2 will be
studied from the point of view of their stable diffeomorphism
groups. The underlying mathematics is well-known, and the only point
of repeating it here is to illustrate the fact that the abstract setup
from the previous section leads to interesting mathematics even in the
simplest cases. The remaining sections will discuss stable
diffeomorphism groups of~4-manifolds.

\subsection{Dimension 0}

The category~$\M_0$ is the category of finite pointed sets and their
pointed bijections. The classifying space is
\begin{displaymath}
  \B\M_0\simeq\coprod_{n\geqslant0}\B\Sigma_{n}.
\end{displaymath}
For stabilisation, one needs~$Y$ to have at least two elements. As
shown in Proposition~\ref{prop:sphere}, the case~$Y=S^0$ is
uninteresting. One could use~$S^{0}\times S^{0}$, but that turns out
to be a connected sums in this case: it is the connected double of the
set~$Y$ with three elements. For any pointed finite set, the connected
sum~$X\# Y$ has one element more than~$X$. There is just
one~$Y$-stable equivalence class of objects, and its automorphism
group is the infinite symmetric group~$\Sigma_\infty$. This gives
\begin{displaymath}
  \B\M_0[Y^{-1}]\simeq\ZZ\times\B\Sigma_{\infty}.
\end{displaymath}
This space has the same homology as the infinite loop space associated
to the sphere spectrum, so that the unstable homotopy groups of its
plus construction are the stable homotopy groups of spheres. Note that
a group completion of~$\B\M_0$ agrees with the plus construction
of~$\B\M_0[Y^{-1}]$, see~\cite{Adams}.

\subsection{Dimension 2}

Let us now consider 2-manifolds which are connected and oriented only,
retaining the notation~$\M_2$ for the corresponding
subcategory. These manifolds are classified by their genus~$g$. As we
consider only diffeomorphisms which fix an embedded disk, the
classifying spaces of the diffeomorphism groups are homotopically
discrete, see~\cite{EarleSchatz} or~\cite{Gramain}. In fact, they are
homotopy equivalent to the classifying spaces of the corresponding
mapping class groups~$\Gamma_{g,1}$. This gives
\begin{displaymath}
  \B\M_2\simeq\coprod_{g\geqslant 0}\B\Gamma_{g,1}.
\end{displaymath}
Stabilisation with respect to the torus~$Y=S^{1}\times
S^{1}$ gives
\begin{displaymath}
  \B\M_2[Y^{-1}]\simeq\ZZ\times\B\Gamma_{\infty,1}.
\end{displaymath}
Also in this case, a group completion of~$\B\M_2$ agrees with the plus
construction of~$\B\M_2[Y^{-1}]$, see~\cite{MadsenWeiss} and the
references within. The monoidal structure on~$\B\M_2$ in question is
the pair-of-pants multiplication, see~\cite{Miller}. It generalises to
higher dimensions, and extends to an action of the little $d$-disks
operad on~$\B\M_d$. This implies that the group completion of~$\B\M_d$
is a $d$-fold loop space. It would be interesting to know whether or
not this is actually an infinite loop space as in the case $d=2$.


\section{Stabilisation in Dimension~4}\label{sec:directions}

In this section, we will consider simply-connected oriented
4-manifolds. There is not even a good conjecture what the set of
isomorphism classes could be, and the entire space~$\B\M_4$ seems to
be far beyond reach at present. In order to simplify this, there are
several different directions~$Y$ in which one could try to
stabilise. The following discusses some choices which are reasonable
from one or another perspective.

\subsection{The case~\texorpdfstring{$Y=\overline{\CC P}^2$}{Y=CP2}} 

Stabilisation with respect to~$Y=\overline{\CC P}^2$ is motivated by
complex algebraic geometry. Two complex algebraic surfaces are
birationally equivalent if and only if they are related by a sequence
of blow-ups. From the point of view of differential topology, the
blow-up of a surface~$X$ is diffeomorphic to~$X\#\overline{\CC
  P}^2$. 

\begin{proposition}
  If two complex algebraic surfaces are birationally equivalent, then
  they are~$\overline{\CC P}^2$-stably equivalent, but (in general)
  not conversely.
\end{proposition}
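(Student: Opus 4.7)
The plan for the forward direction is to invoke the classical resolution of birational maps between smooth projective surfaces. Namely, any birational equivalence between $X_1$ and $X_2$ is dominated by a common blow-up: there exists a smooth projective surface $Z$ equipped with birational morphisms $Z\to X_1$ and $Z\to X_2$, each of which factors as a finite sequence of point blow-ups. Since a single blow-up at a point on a complex surface has underlying smooth 4-manifold diffeomorphic to the connected sum with $\overline{\CC P}^2$, iterating gives diffeomorphisms
\begin{displaymath}
  Z \;\cong\; X_1 \# n_1 \overline{\CC P}^2
  \;\cong\; X_2 \# n_2 \overline{\CC P}^2
\end{displaymath}
for suitable non-negative integers $n_1, n_2$. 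By the definition of $F_Y$-stable equivalence with $Y=\overline{\CC P}^2$ applied to the underlying smooth 4-manifolds, this gives the first assertion.

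For the converse, the plan is to exhibit an explicit pair of algebraic surfaces that are $\overline{\CC P}^2$-stably equivalent but fail to be birational. A convenient candidate is a Dolgachev surface $E(1)_{p,q}$, with $\gcd(p,q)=1$ and $p,q\geqslant 2$, compared with the rational surface $\CC P^2\# 9\overline{\CC P}^2$. These are homeomorphic by Freedman's theorem, having the same intersection form of odd type and trivial Kirby-Siebenmann invariant. Then Wall's stabilisation theorem, sharpened in the present setting by Mandelbaum and Moishezon, implies that after a single further connected sum with $\overline{\CC P}^2$ the two 4-manifolds become diffeomorphic. Hence they are $\overline{\CC P}^2$-stably equivalent. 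On the other hand, the Dolgachev surface is a minimal properly elliptic surface of Kodaira dimension $1$, whereas $\CC P^2\# 9\overline{\CC P}^2$ is rational, of Kodaira dimension $-\infty$; since Kodaira dimension is a birational invariant, the two surfaces cannot be birationally equivalent.

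The main obstacle lies with the counterexample rather than with the forward direction. One must import nontrivial smooth 4-manifold theory, namely the combination of Freedman's classification with Wall's stabilisation and its Mandelbaum-Moishezon refinement, in order to turn a homeomorphism into a diffeomorphism after a single $\overline{\CC P}^2$-stabilisation. One should also verify that the chosen surface is genuinely algebraic (which holds for Dolgachev surfaces, as they are elliptic fibrations over $\PP^1$) so as to honour the hypothesis of the proposition. The forward direction, by contrast, is essentially a dictionary translation between the factorisation of birational maps of algebraic surfaces and the connected-sum language of the preceding sections.
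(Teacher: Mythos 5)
The forward direction of your argument is fine and matches the paper's: birational maps between smooth surfaces factor into blow-ups and blow-downs, and a blow-up is diffeomorphic to connected sum with~$\overline{\CC P}^2$, so birationally equivalent surfaces have a common $\overline{\CC P}^2$-stabilisation.

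Your counterexample for the converse, however, does not work. The Dolgachev surface~$E(1)_{p,q}$ and the rational surface~$\CC P^2\#9\overline{\CC P}^2$ are \emph{not} $\overline{\CC P}^2$-stably equivalent: by Friedman--Morgan (originally via Donaldson invariants, nowadays via the blow-up formula for Seiberg--Witten invariants) they remain non-diffeomorphic after connected sum with \emph{any} number of copies of~$\overline{\CC P}^2$. The stabilisation theorems you invoke do not say what you need: Wall's theorem concerns stabilisation by~$S^2\!\times\!S^2$, and Mandelbaum--Moishezon's ``almost complete decomposability'' concerns connected sum with~$\CC P^2$, not~$\overline{\CC P}^2$. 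Indeed, the persistence of gauge-theoretic invariants under blow-up is precisely the point of the Corollary to Theorem~\ref{thm:CP2invariants} in this paper --- the Bauer--Furuta (hence Seiberg--Witten) invariants depend only on the $\overline{\CC P}^2$-stable equivalence class --- so any pair distinguished by these invariants, such as yours, can never serve as a counterexample here. The paper's counterexample is both simpler and correct: take two minimal non-ruled surfaces that are diffeomorphic but not algebraically isomorphic (there are plenty, e.g.\ within a fixed deformation class). Diffeomorphic surfaces are trivially $\overline{\CC P}^2$-stably equivalent, while uniqueness of minimal models for non-ruled surfaces (III~(4.6) in~\cite{BPV}) shows that birational equivalence would force algebraic isomorphism. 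If you want to salvage your line of thought, you would need a pair that is known to become diffeomorphic after finitely many blow-ups yet is not birational; exhibiting such a pair with distinct Kodaira dimensions is essentially impossible by the gauge-theoretic invariance of Kodaira dimension under blow-up.
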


\begin{proof}
  The first part of the statement is clear from the discussion above.
  
  While diffeomorphic surfaces will {\it a fortiori} be~$\overline{\CC
    P}^2$-stably equivalent, they need not be algebraically
  isomorphic. In fact, minimal models for non-ruled surfaces are
  algebraically unique, see~III~(4.6) in~\cite{BPV} for example. Thus,
  it suffices to find two minimal surfaces which are diffeomorphic but
  not algebraically isomorphic, and there are plenty of those.
\end{proof}

In any case, this discussion leads to the question of finding smooth
minimal models: repre\-sen\-ta\-tives of the~$\overline{\CC P}^2$-stable
equivalence classes. This is richer than the theory of complex
algebraic surfaces:

\begin{proposition}
  Not every~$\overline{\CC P}^2$-stable equivalence class is
  representable by a complex surface.
\end{proposition}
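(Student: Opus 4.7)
The plan is to exhibit a simply-connected oriented 4-manifold whose $\overline{\CC P}^2$-stable equivalence class contains no compact complex surface at all, and the simplest candidate is $X = S^4$. I would argue by contradiction: suppose that $S^4$ is $\overline{\CC P}^2$-stably equivalent to a compact complex surface $Z$. Then by the definition of stable equivalence there exist non-negative integers $k$ and $n$ together with a diffeomorphism
\begin{displaymath}
  Z\,\#\,k\overline{\CC P}^2 \;\cong\; S^4\,\#\,n\overline{\CC P}^2 \;=\; n\overline{\CC P}^2.
\end{displaymath}
In dimension four one has $\chi(A\# B) = \chi(A)+\chi(B)-2$ and $\sigma(A\# B) = \sigma(A)+\sigma(B)$. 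Applying these to both sides of the displayed equation yields $\chi(Z)+k = n+2$ and $\sigma(Z)-k = -n$, and adding the two gives
\begin{displaymath}
  \chi(Z) + \sigma(Z) \;=\; 2.
\end{displaymath}

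The second step is to derive a contradiction from this numerical identity by invoking Noether's formula for a compact complex surface,
\begin{displaymath}
  \chi(\mathcal{O}_Z) \;=\; \frac{\chi(Z)+\sigma(Z)}{4}.
\end{displaymath}
Substituting the identity above forces $\chi(\mathcal{O}_Z) = 1/2$, contradicting the integrality of the holomorphic Euler characteristic. Hence no such $Z$ exists, and the $\overline{\CC P}^2$-stable equivalence class of $S^4$ contains no complex surface.

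Conceptually, the argument rests on the observation that the residue of $\chi + \sigma$ modulo $4$ is a $\overline{\CC P}^2$-stable invariant -- connected sum with $\overline{\CC P}^2$ raises $\chi$ by one and lowers $\sigma$ by one -- and that Noether's formula pins this residue to $0$ for every compact complex surface, while it equals $2$ for $S^4$. The only delicate point is that Noether's formula must be applied to arbitrary compact complex surfaces and not merely the algebraic or K\"ahler ones; but this is classical Hirzebruch--Riemann--Roch applied to the structure sheaf, so no genuine obstacle arises in the argument.
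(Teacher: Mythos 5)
Your proof is correct and is essentially the paper's own argument: the paper also observes that $\chi+\sigma$ is divisible by $4$ for complex surfaces by Noether's formula and concludes that $S^4$, for which $\chi+\sigma=2$, cannot be $\overline{\CC P}^2$-stably equivalent to one. You have simply written out the details (the behaviour of $\chi$ and $\sigma$ under connected sum, and the applicability of Noether's formula to arbitrary compact complex surfaces) that the paper leaves implicit.
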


\begin{proof}
  For complex surfaces, the sum of the Euler characteristic and the
  signature is divisible by 4 by Noether's formula. Therefore, the
  sphere~$S^4$~(and the connected sum~$\CC P^2\#\CC P^2$ and...) is
  not~$\overline{\CC P}^2$-stably equivalent to a complex surface.
\end{proof}

\subsection{The case~\texorpdfstring{$Y=S^2\!\times\!S^2$}{Y=S2xS2}} 

This is the classical case, and it corresponds to stabilising the
intersection form with respect to hyperbolic planes. It is known, by a
result of Wall's, see~\cite{Wall:Manifolds}, that two
simply-connected~4-manifolds are~\hbox{$(S^2\!\times\!S^2)$}-stably
diffeomorphic if and only their intersection forms are isomorphic. It
is now easy to make a list of the~\hbox{$(S^2\!\times\!S^2)$}-stable
equivalence classes.

\begin{proposition}
  The different~\hbox{$(S^2\!\times\!S^2)$}-stable equivalence classes
  are
  \begin{displaymath}
    S^4\qquad mK3\qquad m\CC P^2\#\overline{\CC P}^2
    \qquad \CC P^2\#m\overline{\CC P}^2,
  \end{displaymath}
  where~$m\geqslant 1$ and~$mX$ is again short for the~$m$-fold
  connected sum of~$X$ with itself.
\end{proposition}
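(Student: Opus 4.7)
The plan is to combine Wall's theorem (cited above) with the classification of indefinite unimodular symmetric bilinear forms over $\ZZ$ and Rohlin's theorem. Wall reduces $(S^2\times S^2)$-stable diffeomorphism of simply-connected 4-manifolds to isomorphism of their intersection forms; combined with Proposition~\ref{prop:aut} and the definition of $Y$-stable equivalence, this yields that $X$ and $X'$ are $(S^2\times S^2)$-stably equivalent precisely when there exist $n,n'\geqslant 0$ with $Q_X\oplus nH\cong Q_{X'}\oplus n'H$, where $H$ denotes the intersection form of $S^2\times S^2$ (the hyperbolic plane). The task therefore becomes one of enumerating unimodular symmetric bilinear forms over $\ZZ$ modulo addition of $H$-summands and matching each class with a smooth simply-connected representative.

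By the Milnor--Husemoller classification, every indefinite unimodular form over $\ZZ$ is isomorphic either to $p\langle 1\rangle\oplus q\langle -1\rangle$ (odd type, $p,q\geqslant 1$) or to $aE_8\oplus bH$ (even type, $b\geqslant 1$, $a\in\ZZ$), and in each case the form is determined by its rank and signature. Since $H\cong\langle 1\rangle\oplus\langle -1\rangle$ as an odd form, adding copies of $H$ preserves the signature $p-q$ in the odd case and the integer $a$ in the even case. Any form becomes indefinite after adjoining sufficiently many $H$-summands, so the $(S^2\times S^2)$-stable class is determined by the type (odd/even) together with the signature. Rohlin's theorem then intervenes in the even case: a smooth simply-connected 4-manifold with even intersection form has signature divisible by $16$, and $aE_8\oplus bH$ has signature $\pm 8a$, which forces $a$ to be even.

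It remains to match each allowed pair (type, signature) to a listed representative. The trivial form corresponds to $S^4$; writing $a=-2m$ with $m\geqslant 1$ gives a class realised by $mK3$, whose intersection form is $-2mE_8\oplus 3mH$. In the odd case, $m\CC P^2\#\overline{\CC P}^2$ covers signatures $m-1\geqslant 0$ and $\CC P^2\#m\overline{\CC P}^2$ covers signatures $1-m\leqslant 0$, the two agreeing at $m=1$ (signature $0$). Distinctness of the listed classes is then immediate because type and signature are invariants preserved by stabilization with $H$.

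The substantive inputs are Wall's theorem, the Milnor--Husemoller classification, and Rohlin's theorem; modulo these, the proof is essentially bookkeeping. The main point to watch is exhaustiveness in the even smooth case, namely that Rohlin is the only obstruction one needs and that every even integer $a$ of the appropriate sign is actually realised by the stated $mK3$ family. A secondary subtlety is the a priori role of definite forms (severely restricted in the smooth category by Donaldson), but since a single stabilization with $H$ renders any form indefinite, definite forms contribute nothing beyond the classes already listed.
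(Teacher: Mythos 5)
Your proof is correct and follows essentially the same route as the paper: Wall's stable diffeomorphism theorem reduces the problem to classifying unimodular forms modulo hyperbolic summands, the classification of indefinite forms by rank, signature and type does the bookkeeping, and Rohlin forces the number of $E_8$ summands to be even; the paper merely packages the odd case via the diffeomorphism $(S^2\!\times\!S^2)\#(\CC P^2\#\overline{\CC P}^2)\cong(\CC P^2\#\overline{\CC P}^2)\#(\CC P^2\#\overline{\CC P}^2)$ rather than via signature bookkeeping. One literal slip: $H$ is \emph{not} isomorphic to $\langle 1\rangle\oplus\langle -1\rangle$ as a form (it is even); what you actually need, and what is true, is that $Q\oplus H\cong Q\oplus\langle 1\rangle\oplus\langle -1\rangle$ whenever $Q$ is odd, which is precisely the algebraic shadow of the diffeomorphism the paper invokes, so the argument is unaffected.
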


\begin{proof}
  We have to represent all the stable isomorphism classes of
  intersection forms of 4-manifolds with respect to orthogonal
  summation with hyperbolic planes. In case the form is even, it is
  stably determined by the~(even) number~$2m$ of~$E_8$ summands. These
  are stably represented by~$mK3$~(or~$S^4$ if~$m=0$). In case the
  form is odd, the orthogonal sum with the hyperbolic plane is
  indefinite, so these are stably represented by~$m\CC
  P^2\#n\overline{\CC P}^2$ with some~$m,n\geqslant 1$. But the
  existence of a diffeomorphism
  \begin{equation}\label{eq:diffeo}
    (S^2\!\times\!S^2)\#(\CC P^2\#\overline{\CC
      P}^2)\cong(\CC P^2\#\overline{\CC P}^2)\#(\CC
    P^2\#\overline{\CC P}^2)
  \end{equation}
  implies that, stably, one of~$m$ and~$n$ may be chosen to be~$1$.
\end{proof}

As for the~$(S^2\times S^2)$-stable diffeomorphism groups, only the
groups of components, the stable mapping class groups, have been
studied so far, again initiated by Wall~\cite{Wall:Diff}. See
also~\cite{Quinn}.

The~$(\CC P^2\#\overline{\CC P}^2)$-stable case is related, but
different. Note that the existence of a diffeomorphism~(\ref{eq:diffeo})
implies that~$(\CC P^2\#\overline{\CC P}^2)$-stabilisation is coarser
than stabilisation with respect to~$(S^2\!\times\!S^2)$; it neglects
the distinction between (real) spin and
non-spin~4-manifolds. See~\cite{Giansiracusa} for more on this case.

\subsection{The case~\texorpdfstring{$Y=K3$}{Y=K3}}\label{subsec:K3I}

This is related to the previous example, since the intersection form
of~$S^2\!\times\!S^2$ is an orthogonal summand in that of~$K3$, but
different, since~$S^2\!\times\!S^2$ is not a connected summand
of~$K3$. The final section discusses this at the level of the (stable)
Bauer--Furuta invariants, see Theorem~\ref{thm:stableinvariants}. At
present, it is unknown whether any two homotopy equivalent,
simply-connected~4-manifolds are~$K3$-stably diffeomorphic, see
Problem~4.7 in~\cite{Bauer:Sum}.


\section{Bauer--Furuta invariants}\label{sec:families}

The aim of this section is to show that the Bauer--Furuta invariants,
see~\cite{BauerFuruta}, can be used to define {\it stable}
characteristic classes for families of~4-manifolds.  Let me first
recall how they have been extended, in~\cite{Szymik:Families}, to
define {\it unstable} characteristic classes for families of
4-manifolds. From now on, as already in the previous section, all
4-manifolds will be assumed to be simply-connected.

The Bauer--Furuta invariant of a~4-manifold~$X$ depends on the choice
of a complex spin structure~$\sigma_X$ on~$X$. Complex spin families
are classified by an extension~$\BSpiffc(X,\sigma_X)$ of~$\BDiff(X)$
by the gauge group, which in this case is equivalent to~$\TT$ again.
Let~$\lambda=\lambda(\sigma_X)$ be the virtual vector bundle
over~$\BSpiffc(X,\sigma_X)$ which is the difference of the index
bundle of the Dirac operator (associated to~$\sigma_X$) and the bundle
of self-dual harmonic~2-forms, and denote its Thom spectrum
by~$\BSpiffc(X,\sigma_X)^\lambda$. Then there is a universal
characteristic class for complex spin families with typical
fibre~$(X,\sigma_X)$ living in the 0-th stable cohomotopy
group~$\pi^0_\TT(\BSpiffc(X,\sigma_X)^\lambda)$ of this Thom
spectrum. The family~$X$ over the singleton is classified by a
map~$S^\lambda\rightarrow\BSpiffc(X,\sigma_X)^\lambda$, along which
the universal class pulls back to the Bauer--Furuta invariant
of~$(X,\sigma_X)$ living in~$\pi^0_\TT(S^\lambda)$.

By Bauer's connected sum theorem, see~\cite{Bauer:Sum}, the
Bauer--Furuta invariant of a connected sum~$(X\# Y,\sigma_{X\# Y})$ is
the smash product
\begin{displaymath}
  S^{\lambda(X\# Y)}\cong
  S^{\lambda(X)}\wedge S^{\lambda(Y)}
  \longrightarrow S^0\wedge S^0= S^0
\end{displaymath}
of the invariants of the summands. We will need an extension of that
result to families in order to define stable characteristic classes
below. This will involve based manifolds, so to keep notation
reasonable, the complex spin structure will be omitted from it if
confusion seems unlikely.

Let~$(X,x)$ be a based complex spin~4-manifold, and~$(Y,y',y)$ be a
complex spin~4-manifold with respect to which we want to
stabilise. If~$\mathcal X$ is a based family of complex
spin~4-manifolds with typical fibre~$(X,x)$ over~$B$, the base yields
a thickened section~$B\times \DD^4\rightarrow\mathcal X$ of the
projection~$\mathcal X\rightarrow B$. Such families are classified by
maps from~$B$ to~$\BSpiffc(X,x)$. Similarly, the product family~$Y_B$
over~$B$ with fibre~$Y$ comes with two disjoint thickened sections. The
fibrewise connected sum of~$\mathcal X$ and~$Y_B$ is another based
family over~$B$, say~$\mathcal X\#_BY_B$, this time with typical
fibre~$X\#Y$. If~$\mathcal X$ is the universal family
over~$B=\BSpiffc(X,x)$, the family~$\mathcal X\#_BY_B$ is classified
by a map
\begin{equation}\label{eq:interpretation}
  \BSpiffc(X,x)\longrightarrow\BSpiffc(X\#Y,y).
\end{equation}
This gives a geometric interpretation, on the level of classifying
spaces, of the maps in the colimit~(\ref{eq:colimit})
defining~$\BSpiffc(X\#\infty Y,y)$.

\begin{proposition}\label{prop:glueing}
  With the notation from the paragraph above, the family invariant
  of~$\mathcal X\#_BY_B$ is the fibrewise smash product of the family
  invariants of~$\mathcal X$ and~$Y_B$.
\end{proposition}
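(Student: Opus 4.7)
The plan is to run Bauer's proof of the connected sum theorem in the parametrized setting. I would begin by recalling from \cite{Szymik:Families} the construction of the family Bauer--Furuta invariant: for a based complex spin~4-manifold family~$\mathcal X$ over~$B$, one forms the Hilbert bundle of (Sobolev) configurations fibrewise together with the associated monopole map, and approximates it by a $\TT$-equivariant fibrewise stable map between the Thom spaces of finite-dimensional subbundles. The resulting stable map represents the pullback of the universal characteristic class along the classifying map $B\rightarrow\BSpiffc(X,x)$.

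For the connected sum family~$\mathcal X\#_BY_B$, the two thickened sections pick out a common disc region in every fibre, so there is a canonical fibrewise orthogonal decomposition
\begin{displaymath}
    \lambda(X\# Y)=\lambda(X)\oplus\lambda(Y)
\end{displaymath}
of the virtual index bundle over~$B$, arising from the corresponding decompositions of the Dirac index bundle and of the bundle of self-dual harmonic~2-forms. Under this identification, the source of the family invariant of~$\mathcal X\#_BY_B$ matches the source of the fibrewise smash product of the family invariants of~$\mathcal X$ and~$Y_B$, and the target is~$S^0$ in both cases.

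Next I would carry out Bauer's neck-stretching argument fibrewise. Because the thickened sections provide global trivialisations of the disc neighbourhoods in which the connected sum is formed, a long cylindrical neck can be inserted uniformly in~$b\in B$. Bauer's analysis of the monopole equations on a connected sum with long neck then goes through without essential change in families: solutions decouple in the limit into pairs of solutions on the respective summands, and the induced map on finite-dimensional approximations is homotopic, through $\TT$-equivariant fibrewise stable maps over~$B$, to the fibrewise smash product of the approximations for~$\mathcal X$ and for~$Y_B$. Passing to stable fibrewise homotopy classes over~$B$ yields the claimed identity.

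The main obstacle is the parametrized analytic input, namely producing a single pair of finite-dimensional subbundles of the fibrewise configuration space bundles over~$B$ large enough to realise the monopole maps of~$\mathcal X$, of~$Y_B$ and of~$\mathcal X\#_BY_B$ simultaneously, and to carry Bauer's neck-stretching homotopy. Over a CW base this is a standard exhaustion argument, and it is enough to do it for the universal family over~$\BSpiffc(X,x)\times\BSpiffc(Y,y',y)$, after which Bauer's homotopy between the monopole map on the connected sum and the smash product of the monopole maps on the summands lifts tautologically to the family setting, and the proposition follows by pulling back along the classifying map of~$\mathcal X$ and the constant map classifying~$Y_B$.
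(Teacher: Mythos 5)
Your proposal follows essentially the same route as the paper: the paper's proof likewise observes that the thickened sections make the glueing region a product bundle $B\times(S^3\times I)$, so that Bauer's neck-stretching homotopies from \S2--\S3 of \cite{Bauer:Sum} extend constantly in the $B$-direction. Your version simply spells out more of the supporting detail (the index bundle decomposition and the fibrewise finite-dimensional approximation), which the paper leaves implicit or defers to the surrounding discussion.
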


\begin{proof}
  As long as the glueing happens over product bundles of
  cylinders, say~\hbox{$B\times (S^3\times I)$}, Bauer's proof of his
  connected sum theorem given in~\S 2 and~\S 3 of~\cite{Bauer:Sum}
  can be adapted to families using the same homotopies extended
  constant in the~$B$-direction. This is exactly what the thickened
  sections have been chosen for in the case at hand.
\end{proof}

The map~(\ref{eq:interpretation}) induces a map
\begin{displaymath}
  \BSpiffc(X,x)^{\lambda(X\#Y)}\longrightarrow\BSpiffc(X\#Y,y)^{\lambda(X\#Y)}
\end{displaymath}
between Thom spectra. The bundle used on the left hand side is the
pullback of the bundle used on the right hand side under the
map~\eqref{eq:interpretation}. As the family~$\mathcal X\#_BY_B$
over~$\BSpiffc(X,x)$ is a fibrewise connected sum, this pullback
decomposes as~\hbox{$\lambda(X)\oplus\lambda(Y)$}. As the family $Y_B$
is trivial, the latter bundle $\lambda(Y)$ is trivial. This leads to
an identification
\begin{displaymath}
  \BSpiffc(X,x)^{\lambda(X\#Y)}\simeq\BSpiffc(X,x)^{\lambda(X)}\wedge
  S^{\lambda(Y)}.
\end{displaymath}
Using this and Proposition~\ref{prop:glueing} above, it follows by
naturality of the family invariants that the induced map in cohomotopy
sends the invariant of the universal family over~$\BSpiffc(X\#Y,y)$ to
the fibrewise smash product of the invariant of the universal family
over~$\BSpiffc(X,x)$ with the invariant of the product family~$Y_B$.

The easiest case would be that the invariant of the product
family~$Y_B$ is the identity over~$B$. This happens
for~$Y=\overline{\CC P}^2$ with its standard complex spin structure.

\begin{proposition}
  The Bauer--Furuta invariant of~$\overline{\CC P}^2$ is the class of
  the identity~$S^0\rightarrow S^0$.
\end{proposition}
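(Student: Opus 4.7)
The plan is to reduce the statement to a direct computation of the virtual index bundle $\lambda(\sigma)$ and the monopole map for $\overline{\CC P}^2$ equipped with its standard complex spin structure $\sigma$, and then identify the resulting stable self-map as the identity.

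First, I would compute $\lambda$. The characteristic element of $\sigma$ may be taken to be a generator $c\in H^2(\overline{\CC P}^2;\ZZ)$ with $c^2=-1$, while the signature is $-1$. The Atiyah--Singer index theorem gives the complex index of the associated Dirac operator as $(c^2-\mathrm{sign})/8=0$. Since $\overline{\CC P}^2$ is negative definite, $b^+=0$, so the bundle of self-dual harmonic $2$-forms also vanishes. Hence $\lambda=0$ as a virtual $\TT$-representation over the point, and $S^\lambda\simeq S^0$, so that the invariant lives in $\pi^0_\TT(S^0)$ and the statement at least makes sense.

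Next, I would compute this class using the monopole map $\mu$. By construction, $\mu$ is a proper $\TT$-equivariant map between Hilbert bundles of the form ``linear Fredholm $+$ compact quadratic''. For $\overline{\CC P}^2$ the linear part is $D\oplus d^+$ acting on $\Gamma(S^+)\oplus(i\Omega^1/d(i\Omega^0))$: the Dirac operator $D$ has index $0$ and vanishing kernel for a generic metric, and $d^+$ is an isomorphism on coclosed $1$-forms because $b^+=b_1=0$. Since the quadratic term is compact, the straight-line homotopy from $\mu$ to its linearisation remains proper, so after finite-dimensional approximation the stable $\TT$-equivariant homotopy class of $\mu$ agrees with that of $D\oplus d^+$. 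A $\TT$-equivariant Fredholm isomorphism of virtual index zero stably represents the identity of $S^0$.

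The main obstacle I expect is an orientation/normalisation issue. By the tom Dieck splitting, $\pi^0_\TT(S^0)$ contains a nonzero summand detected by free-orbit information in addition to the $\ZZ$ detected by the fixed-point degree, so in principle two maps which agree on $(S^0)^\TT=S^0$ could still represent different classes. The homotopy to the linear isomorphism circumvents this, but one has to keep careful track of the chosen orientation of $\lambda$ (which is what identifies $S^\lambda$ with $S^0$) and the sign of the determinant of $D\oplus d^+$; this is the only genuine book-keeping needed to conclude with $\id$ rather than with the class of $-\id$.
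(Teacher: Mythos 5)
Your reduction of the statement is fine: with the standard complex spin structure one has $c^2=-1=\mathrm{sign}$, so the Dirac index vanishes, and $b^+=b_1=0$ kills the form part, so $\lambda=0$ and the invariant lives in $\pi^0_\TT(S^0)$. (The paper itself gives no argument here beyond citing Bauer, so you are supplying a proof where the author only refers to one; your overall strategy --- deform the monopole map to its linearisation and observe that the latter is an isomorphism --- is indeed the one underlying the cited result.) But there is a genuine gap at the decisive step: the inference ``since the quadratic term is compact, the straight-line homotopy from $\mu$ to its linearisation remains proper'' is not valid. Compactness of the nonlinearity is a structural hypothesis satisfied by the monopole map of \emph{every} 4-manifold in the Bauer--Furuta framework; if it implied that $\ell+tc$ satisfies the required boundedness/properness condition for all $t\in[0,1]$, then every Bauer--Furuta invariant would be the class of its linearisation and the theory would collapse (the invariant of $K3$, for instance, would not be $\eta$). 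Indeed, for a general monopole map the homotopy already fails at $t=0$, since $\ell$ alone has unbounded preimages whenever it has kernel. What actually makes the homotopy admissible for $\overline{\CC P}^2$ is an a priori estimate from the Weitzenb\"ock formula combined with specific geometry: $\overline{\CC P}^2$ carries a metric of positive scalar curvature (Fubini--Study) and has $b_1=0$, and it is this positivity that yields uniform bounds on the spinor along the whole family of equations $D_A\phi=0$, $F_A^+=t\,\sigma(\phi)+(\text{input})$, and that simultaneously shows, via Lichnerowicz, that the index-zero Dirac operator is invertible. Your appeal to a ``generic metric'' for the vanishing of $\ker D$ is both insufficient (index zero does not give generic injectivity without an argument) and unnecessary once positive scalar curvature is invoked. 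The proof should be restated with positive scalar curvature as the explicit input.

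A smaller point on your final paragraph: for $G=\TT$ in degree $0$ the tom Dieck splitting gives $\pi^0_\TT(S^0)\cong\ZZ$, detected precisely by the fixed-point degree --- the putative extra summands, indexed by the finite subgroups, live in $\pi^s_{-1}(\B\TT_+)=0$ because of the dimension shift by the Weyl group $\TT$. Moreover, since $b^+=0$ there is no real summand in $\lambda$ at all, and the finite-dimensional approximation of the linearisation is a $\TT$-linear isomorphism of a complex representation, hence of degree $+1$. So the sign ambiguity you worry about does not arise, and the class is the identity on the nose once the homotopy has been justified.
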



This is well-known, see~\cite{Bauer:Sum}. Thus, the map induced by
\begin{displaymath}
  \BSpiffc(X,x)\longrightarrow\BSpiffc(X\#\overline{\CC P}^2,y)
\end{displaymath}
in the cohomotopy of the Thom spectra sends the invariant of the
universal family, which lives over~$\BSpiffc(X\#\overline{\CC
  P}^2,y)$, to the invariant of the universal family
over~$\BSpiffc(X,x)$. These classes therefore define an element in
\begin{displaymath}
	\lim_n\pi^0_\TT(\BSpiffc(X\#n\overline{\CC P}^2,y)^{\lambda(X)}).
\end{displaymath}
By the $\lim$-$\lim^1$-sequence, there exists a universal class over
the colimit of these Thom spectra, and the indeterminacy of that class
is the group
\begin{displaymath}
		\lim_n{\!}^1\pi^{-1}_\TT(\BSpiffc(X\#n\overline{\CC P}^2,y)^{\lambda(X)}).
\end{displaymath}
Given a tower of finite groups $G_n$, the completion theorem
(conjectured by Segal) implies that the groups $\pi^{-1}(BG_n)$ are
finite, so that the Mittag-Leffler condition is satisfied
and~\hbox{$\lim^1=0$} in this toy situation. In general, there seems
to be no reason why this should be the case, and its seems best to
honour the coset we have, which restricts to a unique element on
every finite stage $n$; this is all that is really needed. In this
qualified sense, the following results.

\begin{theorem}\label{thm:CP2invariants}
  The Bauer--Furuta invariants define a stable universal characteristic
  class which lives in the
  group
  \begin{displaymath}
	\pi^0_\TT(\BSpiffc(X\#\infty\overline{\CC P}^2,y)^{\lambda(X)}).
	\end{displaymath}
\end{theorem}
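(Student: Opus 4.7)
The plan is to assemble the family Bauer--Furuta invariants at each finite stabilisation stage into a compatible tower along the maps~\eqref{eq:interpretation} and then extract a universal class by passing to the inverse limit, accepting the $\lim^1$-indeterminacy acknowledged just before the statement.

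First, I would apply the construction of \cite{Szymik:Families} at each $n \geqslant 0$ to obtain the universal family invariant
\[
  \mu_n \;\in\; \pi^0_\TT\bigl(\BSpiffc(X\#n\overline{\CC P}^2,y)^{\lambda(X\#n\overline{\CC P}^2)}\bigr).
\]
Because the product family $(\overline{\CC P}^2)_B$ is trivial and the Bauer--Furuta invariant of $\overline{\CC P}^2$ is the identity $S^0 \to S^0$ (so that $\lambda(\overline{\CC P}^2)$ is a virtual bundle of virtual dimension zero and trivial over a point), the fibrewise decomposition $\lambda(X\#n\overline{\CC P}^2) \cong \lambda(X) \oplus \lambda(n\overline{\CC P}^2)$ established after Proposition~\ref{prop:glueing} collapses the Thom spectrum to $\BSpiffc(X\#n\overline{\CC P}^2,y)^{\lambda(X)}$, which is where $\mu_n$ will live for us.

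Second, I would apply Proposition~\ref{prop:glueing} with $Y = \overline{\CC P}^2$ to the stabilisation map~\eqref{eq:interpretation}: the pullback of $\mu_{n+1}$ along \eqref{eq:interpretation} is the fibrewise smash product of $\mu_n$ with the invariant of the product $\overline{\CC P}^2$-family, and the latter is the identity by the preceding proposition. Hence the pullback is exactly $\mu_n$, and the $\mu_n$ form a coherent system along the tower, defining an element of $\lim_n \pi^0_\TT(\BSpiffc(X\#n\overline{\CC P}^2,y)^{\lambda(X)})$. Invoking the $\lim$-$\lim^1$ exact sequence and the fact that the sequential (homotopy) colimit of the Thom spectra is, by definition of the colimit~\eqref{eq:colimit}, the spectrum $\BSpiffc(X\#\infty\overline{\CC P}^2,y)^{\lambda(X)}$, this system lifts to a coset in the cohomotopy of the colimit, which is the asserted group.

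The main obstacle is one of uniqueness rather than existence: the $\lim^1$-term
\[
  \lim_n{\!}^1 \pi^{-1}_\TT\bigl(\BSpiffc(X\#n\overline{\CC P}^2,y)^{\lambda(X)}\bigr)
\]
need not vanish, so only a coset rather than a single class is intrinsically defined. As explained in the paragraph preceding the theorem, this coset restricts to a unique element at every finite stage $n$, which is the operative content of the statement. The only routine bookkeeping is to verify that the identifications of Thom spectra used to move $\mu_n$ into $\pi^0_\TT(\BSpiffc(X\#n\overline{\CC P}^2,y)^{\lambda(X)})$ are themselves compatible along the stabilisation maps, so that one genuinely has a tower of spectra; this follows from the naturality of the decomposition $\lambda(X\#n\overline{\CC P}^2) \cong \lambda(X) \oplus \lambda(n\overline{\CC P}^2)$ under further connected summation with $\overline{\CC P}^2$.
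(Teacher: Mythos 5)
Your proposal is correct and follows essentially the same route as the paper: the compatibility of the universal family invariants along the maps~\eqref{eq:interpretation} via Proposition~\ref{prop:glueing} and the triviality of the $\overline{\CC P}^2$-invariant, the identification of Thom spectra coming from the decomposition $\lambda(X\#Y)\cong\lambda(X)\oplus\lambda(Y)$ with $\lambda(Y)$ trivial, and then the $\lim$-$\lim^1$ sequence with the acknowledged indeterminacy. Nothing essential is missing.
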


As a corollary of this, which may also be deduced from~\cite{Bauer:Sum}
directly, the Bauer--Furuta invariants of~$X$ and~$X\#\overline{\CC
  P}^2$ agree:

\begin{corollary}
  The Bauer--Furuta invariants only depend on~$\overline{\CC
    P}^2$-stable equivalence classes.
\end{corollary}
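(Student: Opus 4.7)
The plan is to apply Bauer's connected sum theorem together with the fact that the Bauer--Furuta invariant of $\overline{\CC P}^2$ is the identity to conclude that the invariant of $X$ is unchanged by stabilisation with $\overline{\CC P}^2$, and then to unwind the definition of $\overline{\CC P}^2$-stable equivalence. First I would apply Proposition~\ref{prop:glueing} in the special case where $B$ is a point (which is Bauer's original connected sum theorem) to identify the Bauer--Furuta invariant of $X\#\overline{\CC P}^2$ with the smash product of the invariants of $X$ and of $\overline{\CC P}^2$. Since the latter is the identity $S^0\rightarrow S^0$, the virtual bundle $\lambda(\overline{\CC P}^2)$ has rank zero, and there is a canonical identification $S^{\lambda(X\#\overline{\CC P}^2)}\simeq S^{\lambda(X)}$ under which the invariant of $X\#\overline{\CC P}^2$ corresponds to that of $X$ in $\pi^0_\TT(S^{\lambda(X)})$. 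Iterating shows that the invariants of $X$ and of $X\#n\overline{\CC P}^2$ agree for every $n\geqslant 0$.

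Second, suppose that $X$ and $X'$ are $\overline{\CC P}^2$-stably equivalent, so that there are non-negative integers $n,n'$ and a diffeomorphism
\begin{displaymath}
  X\#n\overline{\CC P}^2\cong X'\#n'\overline{\CC P}^2
\end{displaymath}
of based complex spin $4$-manifolds. Diffeomorphism invariance of the Bauer--Furuta construction then yields equality of the invariants of $X\#n\overline{\CC P}^2$ and $X'\#n'\overline{\CC P}^2$. Combined with the first step, both sides are identified with the invariants of $X$ and of $X'$ respectively, so the two invariants coincide, which is the claim.

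The main delicate point is matching up the Thom-spectrum targets of the various invariants, which a priori live in cohomotopy groups of distinct spectra. This compatibility is controlled entirely by the rank-zero statement for $\lambda(\overline{\CC P}^2)$, itself encoded by the fact that the invariant of $\overline{\CC P}^2$ is represented by a map $S^0\rightarrow S^0$ and not one between shifted spheres. Alternatively, one can short-circuit this bookkeeping by invoking Theorem~\ref{thm:CP2invariants} directly: the stable universal class produced there restricts at each finite stage to the invariant of $X\#n\overline{\CC P}^2$, so these invariants are automatically different presentations of a single element and must agree.
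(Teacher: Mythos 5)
Your argument is correct and follows the same route as the paper, which derives the corollary from Theorem~\ref{thm:CP2invariants} while noting it can also be deduced directly from Bauer's connected sum theorem together with the fact that the invariant of~$\overline{\CC P}^2$ is the identity~$S^0\rightarrow S^0$ (so that~$\lambda(\overline{\CC P}^2)=0$ and the Thom-spectrum targets match up); you spell out both of these routes explicitly, including the correct handling of stable \emph{equivalence} with possibly different numbers of summands on each side.
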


The strength of the theorem as compared to its corollary lies in the
fact that it gives information on the entire classifying space, not
just on the set of its components.


\section{Localisation}\label{sec:localisation}

In the previous section, we have seen that the Bauer--Furuta classes
are invariants of the~$\overline{\CC P}^2$-stable diffeomorphism
category of complex spin~4-manifolds. The reason for this was that the
invariant of~$\overline{\CC P}^2$ itself is the identity map. However,
for the general~$Y$-stable case, the invariant of~$Y$ need not be the
identity. In fact, it need not even be invertible; but, it can be
formally inverted as in Section~\ref{sec:endofunctors}. This will be
done in this section, in order to define invariants of the~$Y$-stable
diffeomorphism type of~$X$ even in the case when the invariant of~$Y$
is not invertible. For clarity, the focus will be on plain manifolds
here; general families should be treated using the notation from the
previous section.

If~\hbox{$f\colon A\rightarrow B$} is a stable map, smashing with~$f$
defines the morphism groups
\begin{displaymath}
  [M,N]_f 
  \;\stackrel{\text{\tiny def}}{=}\; 
  \mathrm{colim}_k[M\wedge A^{\wedge k},N\wedge B^{\wedge k}]
\end{displaymath}
in the localisation of the stable homotopy category with respect to
the endofunctor~$?\wedge f$. Similar notation will be used in
the equivariant case. Localisation away from Euler classes of
representations has a long tradition~\cite{tomDieck}.

This construction will now be applied in the case~$f=m(Y)\colon
S^{\lambda(Y)}\rightarrow S^0$, the Bauer--Furuta invariant of the
4-manifold~$Y$ with respect to which we want to stabilise. For every
complex spin 4-manifold~$X$ as before, the Bauer--Furuta invariant of
the connected sum~$X\# Y$ is obtained from that of~$X$ by smashing it
with~$m(Y)$, by Bauer's theorem again. This implies that the sequence of
invariants~$m(X\#kY)$, for varying~$k$, defines an element
\begin{equation}\label{TT-invariant}
  m(X\#\infty Y)\in[S^{\lambda(X)},S^0]^\TT_{m(Y)}
\end{equation}
in the localisation of the~$\TT$-equivariant stable homotopy category
with respect to~$m(Y)$. It turns out, however, that this is
practically useless, except for manifolds~$Y$ with~$b^+(Y)=0$, such as
$Y=\overline{\CC P}^2$, since the~$\TT$-equivariant Bauer--Furuta
invariants are known to be nilpotent
otherwise. See~\cite{FurutaKametaniMinami}, for example. And clearly,
if~$f$ is nilpotent, then the localisation with respect to~$f$ leads
to trivial groups.

However, if~$Y$ is real spin, it is known that there is a lift of the
Bauer--Furuta invariant~$m(Y)$ from the~$\TT$-equivariant to the
$\PP$-equivariant stable homotopy category,
see~\cite{BauerFuruta}. Here and in the following, the
group~$\PP=\mathrm{Pin}(2)$ is the normaliser of~$\TT$
inside~$\mathrm{Sp}(1)$; it sits in an extension
\begin{displaymath}
  1\longrightarrow \TT\longrightarrow \PP\longrightarrow 
  \ZZ/2\longrightarrow1.
\end{displaymath}
One also has to specify a universe for the group~$\PP$, which in this case is of the
form~\hbox{$\RR^\infty\oplus D^\infty\oplus H^\infty$}, where~$\RR$ is
the trivial~$\PP$-line,~$D$ is the line with the action induced by the
antipodal action of~\hbox{$\PP/\TT=\ZZ/2$}, and~$H$ is
the~4-dimensional tautological quaternion action. This universe is
understood from now on. A useful reference for the homotopy theory in
this context is~\cite{Birgit}.

In the situation leading to~(\ref{TT-invariant}), if~$X$ and~$Y$ are
real spin, so is~$X\#kY$ for all~$k$. The same reasoning as above
yields the following result.

\begin{theorem}\label{thm:stableinvariants} Let~$X$ and~$Y$ be real
  spin 4-manifolds. The sequence of invariants~$m(X\#kY)$, for varying~$k$,
  defines an element
  \begin{displaymath}
    m(X\#\infty Y)\in[S^{\lambda(X)},S^0]^\PP_{m(Y)}
  \end{displaymath}
  in the localisation of the~$\PP$-equivariant stable homotopy
  category with respect to~$m(Y)$.
\end{theorem}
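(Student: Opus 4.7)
The plan is to exhibit the claimed element as the natural colimit of the sequence $\{m(X\#kY)\}_{k\geqslant 0}$ and to check that this sequence is compatible with the structure maps in the colimit defining~$[S^{\lambda(X)},S^0]^\PP_{m(Y)}$.

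First I would observe that the connected sum of two real spin 4-manifolds is again real spin, so every~$X\#kY$ carries a real spin structure and, by the~$\PP$-equivariant refinement of~\cite{BauerFuruta} in the universe~$\RR^\infty\oplus D^\infty\oplus H^\infty$ fixed above, has a Bauer--Furuta invariant
\begin{displaymath}
m(X\#kY)\colon S^{\lambda(X\#kY)}\longrightarrow S^0
\end{displaymath}
in the~$\PP$-equivariant stable homotopy category. Additivity of the virtual index bundle~$\lambda$ under connected sum gives a canonical identification~$S^{\lambda(X\#kY)}\cong S^{\lambda(X)}\wedge (S^{\lambda(Y)})^{\wedge k}$ as~$\PP$-spectra, which places each~$m(X\#kY)$ in the~$k$-th term~$[S^{\lambda(X)}\wedge (S^{\lambda(Y)})^{\wedge k},S^0]^\PP$ of the colimit defining the localised morphism group, with~$A=S^{\lambda(Y)}$,~$B=S^0$ and~$f=m(Y)$.

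The remaining step is to verify compatibility with the structure maps, which act by smashing with~$m(Y)$. This is exactly the assertion
\begin{displaymath}
m(X\#(k+1)Y)\;=\;m(X\#kY)\wedge m(Y)
\end{displaymath}
in the~$\PP$-equivariant stable homotopy category, that is, the~$\PP$-equivariant version of Bauer's connected sum theorem. Granting this, the~$\{m(X\#kY)\}_k$ form a compatible system, and their common image in the colimit is the desired class~$m(X\#\infty Y)$.

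The main obstacle is therefore the~$\PP$-equivariant gluing formula. I would not reprove Bauer's theorem from scratch, but argue as for Proposition~\ref{prop:glueing}: Bauer's homotopies in~\S2--\S3 of~\cite{Bauer:Sum} take place over a cylindrical neck~$S^3\times I$ in the linear finite-dimensional approximations, and the~$\PP$-action on those approximations (induced from the real spin structure via the tautological quaternion summand~$H$ of the chosen universe) leaves the cylinder and all relevant deformations invariant, so the~$\TT$-equivariant proof goes through~$\PP$-equivariantly verbatim. Once this is in place, the theorem follows from the definitions, with no further computation.
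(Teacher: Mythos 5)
Your proposal is correct and follows essentially the same route as the paper: identify $S^{\lambda(X\#kY)}$ with $S^{\lambda(X)}\wedge(S^{\lambda(Y)})^{\wedge k}$, invoke the connected sum theorem to show the sequence $m(X\#kY)$ is compatible with the structure maps of the colimit defining $[S^{\lambda(X)},S^0]^\PP_{m(Y)}$, and take the resulting class. The only difference is that the paper simply cites Bauer's connected sum theorem (which in~\cite{Bauer:Sum} is already stated equivariantly for the $\Pin(2)$-refined invariants of spin manifolds) rather than re-adapting its proof, so your final paragraph is a safe but not strictly necessary precaution.
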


The invariant from the previous theorem will be referred to as the
{\it ~$Y$-stable Bauer--Furuta invariant} of~$X$. In the following
section, the general theory will be illustrated for two~$Y$ which are
non-trivial in the sense that their~$\PP$-equivariant Bauer--Furuta
invariants are non-nilpotent,~$Y=S^2\!\times\!S^2$ and~$Y=K3$.

The rest of this section will contain two general remarks, both
related to $\PP$-fixed points. Here and in the following, the notation
$\Phi^\PP$ will be used for the geometric fixed point functor, which
sends the suspension spectrum of a $\PP$-space to the suspension
spectrum of its $\PP$-fixed points. See~\cite{Mayetal}, XVI.3.

First, the~$\PP$-equivariant Bauer--Furuta invariant of a real spin
4-manifold, stable or not, will always restrict to the identity map of
$S^0$ on~$\PP$-fixed points. This is a consequence of the
$\PP$-actions used on the source and target of the monopole map to
make this map $\PP$-equivariant: the group~$\PP$ acts on spinors via
the representation~$H$ and on forms via $D$; the trivial
representation does not occur. Therefore, these invariants are never
nilpotent. As the identity is invertible, there is an induced dashed
arrow in the diagram
\begin{center}
  \mbox{ 
    \xymatrix{
      [M,N]^\PP\ar[r]\ar[rd]_-{\Phi^\PP}& [M,N]^\PP_{m(Y)}\ar@{-->}[d]\\
      & [\Phi^\PP(M),\Phi^\PP(N)],
    } 
  }
\end{center}
and the observation above may be rephrased to say that the image of a
$Y$-stable Bauer--Furuta invariant will always map down to the identity
of $S^0$. This clearly gives restrictions on the possible values of
the invariants.

Second, passage to~$\PP$-fixed points is also a localisation in the
situation at hand.

\begin{proposition} There is a natural isomorphism 
  \begin{displaymath}
    [\Phi^\PP M,\Phi^\PP N]\cong[M,N]^\PP_{\e(D\oplus H)}
  \end{displaymath}
  for all~$\PP$-spectra~$M$ and~$N$ indexed on our~$\PP$-universe.
\end{proposition}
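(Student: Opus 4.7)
The plan is to identify both sides as the same colimit, exploiting that the geometric fixed-point functor is itself a localization. Since $\Phi^\PP$ is symmetric monoidal and $(D\oplus H)^\PP = 0$, the image $\Phi^\PP(\e(D\oplus H))$ equals the identity of $S^0$. In particular, $\Phi^\PP$ inverts smashing with $\e(D\oplus H)$, so by the universal property of localization it factors through a natural comparison map
\[
\alpha\colon [M,N]^\PP_{\e(D\oplus H)}\longrightarrow[\Phi^\PP M,\Phi^\PP N].
\]
The claim is that $\alpha$ is a bijection.

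To prove this, I would use the standard description $\Phi^\PP(-)\simeq((-)\wedge\widetilde{E\mathcal{P}})^\PP$, where $\mathcal{P}$ is the family of proper subgroups of $\PP$ and $\widetilde{E\mathcal{P}}$ is the cofibre of $E\mathcal{P}_+\to S^0$. In the universe $U = \RR^\infty \oplus D^\infty \oplus H^\infty$, the finite-dimensional subrepresentations $V\subset U$ with $V^\PP = 0$ are precisely the sums $aD\oplus bH$, and these are cofinal with the sequence $n(D\oplus H)$ as $n\to\infty$. The telescope of Euler classes therefore provides a model
\[
\widetilde{E\mathcal{P}}\simeq\operatorname{hocolim}_n S^{n(D\oplus H)}
\]
(suitably interpreted in this incomplete universe), and hence
\[
\Phi^\PP N\simeq\operatorname{hocolim}_n(N\wedge S^{n(D\oplus H)})^\PP
\]
as a non-equivariant spectrum.

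Given this description, any non-equivariant map $\Phi^\PP M\to\Phi^\PP N$ is represented by a $\PP$-equivariant map $M\wedge S^{n(D\oplus H)}\to N\wedge S^{m(D\oplus H)}$ for some $m\geq n$, well-defined up to further smashings with $\e(D\oplus H)$. Using the identification $\Phi^\PP(S^{k(D\oplus H)}\wedge X) \simeq \Phi^\PP X$, one rewrites this as
\[
[\Phi^\PP M,\Phi^\PP N] \;\cong\; \operatorname{colim}_k\,[M,N\wedge S^{k(D\oplus H)}]^\PP \;=\; [M,N]^\PP_{\e(D\oplus H)},
\]
and a diagram chase confirms that this identification coincides with $\alpha$. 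The main technical obstacle is justifying the lifting step: that every non-equivariant map on $\PP$-fixed points arises from a $\PP$-equivariant map after smashing with sufficiently many Euler classes. This is the assertion that $\Phi^\PP$ is the Bousfield localization of the $\PP$-equivariant stable category over $U$ at the class of Euler classes $\e(V)$ with $V^\PP = 0$; the virtue of the incomplete universe $U$ is that this class is generated up to cofinality by the single element $\e(D\oplus H)$, which is what makes the statement of the proposition so clean.
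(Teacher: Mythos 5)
Your argument is correct and is essentially the paper's proof: the isomorphism $[\Phi^\PP M,\Phi^\PP N]\cong[M,N\wedge\colim_U S^U]^\PP$ that the paper quotes from May et al.\ (XVI.6) is exactly your telescope description of $\Phi^\PP$ via $\widetilde{E\mathcal{P}}\simeq\operatorname{hocolim}_n S^{n(D\oplus H)}$, and both proofs then conclude by the cofinality of the multiples $n(D\oplus H)$ among the fixed-point-free subrepresentations of the universe and by taking the colimit out of the brackets. The only point to keep in mind is that your identification of $\widetilde{E\mathcal{P}}$ with the Euler-class telescope is special to the incomplete universe $\RR^\infty\oplus D^\infty\oplus H^\infty$ (as your parenthetical acknowledges), where the only isotropy groups that occur are $1$, $\TT$ and $\PP$; over a complete universe the telescope on $n(D\oplus H)$ would not kill the fixed points of all proper subgroups.
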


\begin{proof}
  In general, there is an isomorphism
  \begin{displaymath}
    [\Phi^\PP M,\Phi^\PP N]\cong[M,N\wedge\colim_U S^U]^\PP,
  \end{displaymath}
  where the colimit is over the subrepresentations~$U$ of the universe
  which satisfy~$U^\PP=0$. See~\cite{Mayetal}, XVI.6. In the case at
  hand, we have~$U^\PP=0$ if and only if all the irreducible summands
  of~$U$ are isomorphic to~$D$ or~$H$. This gives an isomorphism
  \begin{displaymath}
    [M,N\wedge\colim_U S^U]^\PP\cong[M,N]^\PP_{\e(D\oplus H)}
  \end{displaymath}
  by taking the colimit out of the brackets.
\end{proof}


\section{Examples:~\texorpdfstring{$Y=S^2\!\times\!S^2$}{Y=S2xS2} and~\texorpdfstring{$Y=K3$}{Y=K3}}

In this section, the~$\PP$-equivariant~$Y$-stable Bauer--Furuta
invariants will be discussed in the two cases~$Y=S^2\!\times\!S^2$
and~$Y=K3$. Computations in our~$\PP$-equivariant stable homotopy
category are much easier than in the general case due to the
simplicity of the universe at hand. The stabilisers occuring here are
only~$1$,~$\TT$, and~$\PP$, with Weyl
groups~\hbox{$W1=\PP$},~\hbox{$W\TT=\ZZ/2$}, and~\hbox{$W\PP=1$},
respectively. Only the latter two are finite. It follows that the
Burnside ring~$[S^0,S^0]^\PP$ of~$\PP$ has rank~2. The following
result is an immediate application of the splitting theorem of tom
Dieck and Segal.

\begin{proposition}\label{prop:free group}
  For~$n\geqslant1$, the group~$[S^0,S^{nD}]^\PP$ is free abelian 
  of rank~$1$, generated by the~$n$-th power of the Euler class~$\e(D)\colon
  S^0\rightarrow S^D$. An isomorphism is given by the mapping degree
  of the~$\PP$-fixed points.
\end{proposition}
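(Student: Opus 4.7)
The plan is to apply the tom Dieck--Segal splitting theorem to express $[S^0, S^{nD}]^\PP = \pi_0^\PP(S^{nD})$ as a direct sum indexed by the three conjugacy classes of closed subgroups of $\PP$, namely $1$, $\TT$, and $\PP$:
\[
\pi_0^\PP(S^{nD}) \;\cong\; \bigoplus_{(H)} \pi_0\bigl(EWH_+ \wedge_{WH} S^{(nD)^H}\bigr).
\]

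First I would compute each summand. For $H = \PP$ one has $(nD)^\PP = 0$ and $W\PP = 1$, so the contribution is $\pi_0(S^0) = \ZZ$, and the projection onto this summand is the geometric fixed-point functor $\Phi^\PP$, i.e.\ the $\PP$-fixed-point mapping degree. For $H = \TT$, the subgroup $\TT$ acts trivially on $D$, so $(nD)^\TT$ has dimension $n$ with residual antipodal action of $W\TT = \ZZ/2$; the summand is then $\pi_0$ of the Thom spectrum of $n$ copies of the tautological line bundle over $B\ZZ/2 = \RR P^\infty$. For $H = 1$, the contribution is $\pi_0$ of the Thom spectrum of the rank-$n$ real bundle over $B\PP$ associated to the representation $nD$.

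The key observation is that the Thom spectrum of a rank-$n$ real vector bundle over a connected base is $(n-1)$-connected, since its cellular decomposition has no cells below dimension $n$. Consequently, for $n \geq 1$ both the $H = \TT$ and $H = 1$ summands vanish, leaving $\pi_0^\PP(S^{nD}) \cong \ZZ$ detected by $\Phi^\PP$, which is precisely the mapping degree of the $\PP$-fixed points as asserted.

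To identify the generator I would observe that the Euler class $\e(D) \colon S^0 \to S^D$ is the one-point compactification of the inclusion $0 \hookrightarrow D$, and passing to $\PP$-fixed points gives the compactification of $0 \hookrightarrow D^\PP = 0$, which is the identity $S^0 \to S^0$. Hence $\e(D)^n$ restricts on $\PP$-fixed points to the identity, which has degree one, so $\e(D)^n$ generates $\ZZ$. The one technical point to verify is that the splitting theorem applies in our restricted universe $\RR^\infty \oplus D^\infty \oplus H^\infty$ rather than the complete $\PP$-universe; since this universe contains the required fixed-point universes for each $H$, this is a routine matter and not a serious obstacle to the argument.
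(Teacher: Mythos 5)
Your argument is exactly the one the paper intends: the text introduces the proposition as "an immediate application of the splitting theorem of tom Dieck and Segal" and gives no further details, and your computation of the three summands (with the $H=\TT$ and $H=1$ contributions killed by the connectivity of the relevant Thom spectra, and the $H=\PP$ summand detected by the fixed-point degree, on which $\e(D)^n$ has degree one) supplies those details correctly. The only point I would phrase more carefully is the final remark about the incomplete universe: the right justification is that the splitting over an incomplete universe runs over the isotropy groups actually occurring in that universe, which for $\RR^\infty\oplus D^\infty\oplus H^\infty$ are exactly $1$, $\TT$, and $\PP$ --- precisely the observation the paper makes in the sentences preceding the proposition.
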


Let $\eta\colon S^H\rightarrow S^{3D}$ denote the $\PP$-equivariant
Hopf map. As a non-equivariant map it represents (a suspension of) the
usual Hopf map, and on geometric $\PP$-fixed points it is the identity
of~$S^0$. It follows that~$\Phi^\PP(\eta\e(H))$ is the identity as
well, so that the previous proposition implies the following result.

\begin{corollary}
  The relation
  \begin{equation}
    \label{eq:relation}
    \eta\e(H)=e(D)^3
  \end{equation}
  holds.
\end{corollary}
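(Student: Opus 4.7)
The plan is to identify both sides as elements of the group $[S^0, S^{3D}]^\PP$ and to compute their images under the mapping-degree-on-$\PP$-fixed-points isomorphism furnished by Proposition~\ref{prop:free group}.

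First I would verify that both $\eta\e(H)$ and $\e(D)^3$ lie in $[S^0,S^{3D}]^\PP$: the composition $\e(H)\colon S^0 \to S^H$ followed by $\eta\colon S^H \to S^{3D}$ has source $S^0$ and target $S^{3D}$, as does the threefold iterate of the Euler class $\e(D)\colon S^0 \to S^D$.

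The key computation is then $\Phi^\PP(\eta\e(H)) = \Phi^\PP(\e(D)^3) = \id_{S^0}$. Since the geometric fixed point functor is symmetric monoidal, it respects composition. Because $H^\PP = 0 = D^\PP$, the Euler classes $\e(H)$ and $\e(D)$ restrict on $\PP$-fixed points to the identity of $S^0$: by construction the Euler class of a representation $V$ with $V^\PP = 0$ is the one-point compactification of the inclusion $V^\PP \hookrightarrow V$, and upon restriction this identifies both source and target with $S^0$ via the identity. Combined with the given identity $\Phi^\PP(\eta) = \id_{S^0}$, this yields both fixed-point computations.

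Finally, Proposition~\ref{prop:free group} tells us that the degree-on-$\PP$-fixed-points map is an isomorphism $[S^0,S^{3D}]^\PP \to \ZZ$. Both $\eta\e(H)$ and $\e(D)^3$ are sent to $1$ under this isomorphism, so they must coincide. I do not foresee any substantive obstacle: the content has already been absorbed into Proposition~\ref{prop:free group} and into the known $\PP$-fixed-point behaviour of $\eta$, so the corollary reduces to the observation that Euler classes of $\PP$-representations without fixed vectors restrict to the identity of $S^0$ on $\PP$-fixed points.
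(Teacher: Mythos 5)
Your proposal is correct and follows essentially the same route as the paper: the paper also observes that $\Phi^\PP(\eta)$ is the identity of $S^0$, concludes that $\Phi^\PP(\eta\e(H))$ is the identity, and then invokes the fixed-point-degree isomorphism of Proposition~\ref{prop:free group} to identify it with $\e(D)^3$. Your only addition is to spell out explicitly why Euler classes of representations with trivial fixed subspace restrict to the identity on geometric fixed points, a step the paper leaves implicit (and note that composition is preserved by mere functoriality of $\Phi^\PP$; monoidality is not needed for that step).
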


A similar statement to Proposition~\ref{prop:free group} can be
established for~$H$ instead of~$D$. As it will not be needed in the
following, let us turn towards the examples now.

\begin{proposition}
  The~$\PP$-equivariant Bauer--Furuta invariant of~$S^2\!\times\!S^2$ is
  the Euler class~$\e(D)$ of~$D$. 
\end{proposition}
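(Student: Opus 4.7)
The plan is to combine three ingredients: a computation of the virtual index bundle $\lambda(S^2\!\times\!S^2)$, the rigidity statement for $\PP$-equivariant maps into $S^{nD}$ recorded in Proposition~\ref{prop:free group}, and the observation at the end of Section~\ref{sec:localisation} that the geometric $\PP$-fixed points of a real-spin Bauer--Furuta invariant are always the identity of $S^0$.

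First, I would identify $\lambda(S^2\!\times\!S^2)$ in the $\PP$-representation ring. The manifold $S^2\!\times\!S^2$ is spin, its intersection form is hyperbolic, so $\sigma=0$ and $b^+=1$. By Atiyah--Singer the complex index of the Dirac operator is $-\sigma/8=0$, so the quaternionic (and hence $H$-isotypic) part of $\lambda$ vanishes, while the one-dimensional space of self-dual harmonic two-forms contributes a single copy of the sign representation $D$ with the opposite sign. Thus $\lambda(S^2\!\times\!S^2) = -D$, and the $\PP$-equivariant Bauer--Furuta invariant of $S^2\!\times\!S^2$ is a class in
\begin{displaymath}
  [S^{-D},S^0]^{\PP} \cong [S^0,S^D]^{\PP}.
\end{displaymath}

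Next, by Proposition~\ref{prop:free group} this group is infinite cyclic, generated by the Euler class $e(D)$, and the isomorphism to $\ZZ$ is realised by the mapping degree on $\PP$-fixed points. Hence $m(S^2\!\times\!S^2) = k\cdot e(D)$ for a unique integer $k$, and it only remains to pin down $k$. To that end, apply the geometric fixed-point functor $\Phi^\PP$. Since $D^\PP=0$, the map $(S^D)^\PP = S^0$, and $e(D)$ restricts on $\PP$-fixed points to the identity of $S^0$, of degree $1$. On the other hand, the first remark of Section~\ref{sec:localisation} asserts that $\Phi^\PP m(X)$ is the identity of $S^0$ for every real-spin 4-manifold $X$, also of degree $1$. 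Comparing, $k=1$ and the invariant is exactly $e(D)$.

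The only point where care is required is the bookkeeping for $\lambda$ as a $\PP$-representation: the Dirac index bundle carries a quaternionic structure and so contributes multiples of $H$, while the self-dual forms carry the antipodal $\PP/\TT$-action and contribute multiples of $D$. Both conventions are already fixed by the choice of $\PP$-universe $\RR^\infty\oplus D^\infty\oplus H^\infty$ in Section~\ref{sec:localisation}, so this is purely a matter of reading off the representation-theoretic content and does not require any new computation.
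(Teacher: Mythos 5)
Your proof is correct and follows the same route as the paper: identify the group $[S^0,S^{D}]^{\PP}$ via the index computation, invoke Proposition~\ref{prop:free group} to reduce to the degree on $\PP$-fixed points, and use the observation that this degree is always $1$ for real spin $4$-manifolds. The paper merely states ``the index computation shows\dots'' where you spell out $\sigma=0$, $b^+=1$, $\lambda=-D$; that added detail is accurate and consistent with the $K3$ case ($\lambda=H-3D$) treated next in the paper.
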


\begin{proof}
  The index computation shows that the invariant lives
  in~$[S^0,S^D]^\PP$. By Proposition~\ref{prop:free group}, it
  suffices to know map induced on the~$\PP$-fixed points. As already
  remarked in the previous section, this is always the identity.
\end{proof}

\begin{proposition}
  The~$\PP$-equivariant Bauer--Furuta invariant of a~$K3$ surface is
  the~$\PP$-equi\-vari\-ant Hopf map~$\eta$.
\end{proposition}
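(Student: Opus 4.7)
The strategy mirrors the proof of the preceding proposition for $S^2\!\times\!S^2$, replacing the rank-1 detection of Proposition~\ref{prop:free group} by a finer analysis of $[S^H, S^{3D}]^\PP$.

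First, I would pin down the ambient group. For $K3$ one has $\sigma = -16$ and $b^+ = 3$: the quaternionic Dirac index is $-\sigma/16 = 1$, contributing a single copy of $H$ to the virtual index bundle, while the space of self-dual harmonic $2$-forms contributes $3D$ (the action of $\TT$ is trivial and the generator of $\PP/\TT$ acts by $-1$). Hence $\lambda(K3) = H - 3D$, and both $m(K3)$ and $\eta$ live in $[S^H, S^{3D}]^\PP$.

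Second, I would reduce to Proposition~\ref{prop:free group} by precomposing with the Euler class $\e(H)\colon S^0 \to S^H$. Since $H$ has no trivial $\PP$-summand, $\Phi^\PP(\e(H)) = \id_{S^0}$, so functoriality of geometric fixed points together with the general remark $\Phi^\PP(m(K3)) = \id$ from the previous section yields $\Phi^\PP(m(K3) \circ \e(H)) = \id$. Applying Proposition~\ref{prop:free group} to $[S^0, S^{3D}]^\PP$ then forces
\begin{displaymath}
  m(K3) \circ \e(H) \;=\; \e(D)^3 \;=\; \eta \circ \e(H),
\end{displaymath}
the second equality being the relation~\eqref{eq:relation}.

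Third, I would upgrade this identity to $m(K3) = \eta$ in $[S^H, S^{3D}]^\PP$ via the cofiber sequence
\begin{displaymath}
  S^0 \xrightarrow{\e(H)} S^H \longrightarrow \Sigma S(H)_+,
\end{displaymath}
whose cofiber is built from free $\PP$-cells, since $\PP$ acts freely on the unit sphere $S(H) \cong S^3$ by left multiplication. The kernel of $\e(H)^*\colon [S^H, S^{3D}]^\PP \to [S^0, S^{3D}]^\PP$ is then a quotient of $[\Sigma S(H)_+, S^{3D}]^\PP$, which by freeness reduces to a non-equivariant stable cohomotopy computation over $S(H)/\PP \cong \RR P^2$. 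I expect this last step to be the main obstacle: the relevant non-equivariant group is isomorphic to $\ZZ/2$, so an ambiguity of order two may persist. To kill it, I would use the $\TT$-equivariant restriction of $m(K3)$, which by the same index argument lands in a group again controlled by a rank-1 detection on $\TT$-fixed points, or alternatively invoke the classical fact that the underlying non-equivariant Bauer--Furuta invariant of $K3$ is a suspension of the usual Hopf map; either uniquely singles out $\eta$ inside $[S^H, S^{3D}]^\PP$.
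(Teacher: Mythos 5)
Your strategy is the same as the paper's: the index computation places both $m(K3)$ and $\eta$ in $[S^H,S^{3D}]^\PP$, one restricts along $\e(H)\colon S^0\to S^H$ to $[S^0,S^{3D}]^\PP\cong\ZZ$, and the $\PP$-fixed-point degree of Proposition~\ref{prop:free group} identifies both restrictions with $\e(D)^3$. The paper phrases the second step via the cofibre sequence $S(H)_+\to S^0\to S^H$ and then \emph{claims} that $\e(H)^*$ is an isomorphism because $[\Sigma^tS(H)_+,S^{3D}]^\PP\cong[\RR P^2_+,S^{3-t}]$ vanishes for $t=0,1$; that is the only point where you diverge, since you do not believe the $t=1$ vanishing and build in a fallback. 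Your scepticism is justified: $[\RR P^2_+,S^2]\cong H^2(\RR P^2;\ZZ)\cong\ZZ/2$ by the stable Hopf degree theorem, so the group the paper writes down for $t=1$ is not zero, and the honest equivariant group carries in addition a twist by the degree~$-1$ action of $j\in\PP$ on $S^{3D}$, which if anything makes it larger ($H^2(\RR P^2;\ZZ_w)\cong\ZZ$ rather than $\ZZ/2$). So the paper's own proof of injectivity of $\e(H)^*$ is defective at exactly the step you single out, and what really has to be shown is that the image of $[\Sigma S(H)_+,S^{3D}]^\PP$ in $[S^H,S^{3D}]^\PP$ dies, or else the class must be pinned down by some auxiliary detector.

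That said, your repair is only a sketch, and its key assertion is not established. Invoking the classical fact that the underlying non-equivariant invariant of $K3$ is the Hopf map (Furuta, Bauer) only finishes the argument if the kernel of $\e(H)^*$ is faithfully detected by the forgetful map to $[S^4,S^3]\cong\ZZ/2$; your own estimate makes that kernel at most $\ZZ/2$, but the twisted computation above suggests it could be infinite cyclic, in which case a $\ZZ/2$-valued detector cannot separate all candidates and you would need the $\TT$-equivariant restriction (note $D|_\TT$ is trivial, so this lands in $[S^{2\CC_1},S^3]^\TT$, which again requires a small computation) or a direct determination of $[S^H,S^{3D}]^\PP$. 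So the final step of your proposal is a genuine gap --- but it is one you explicitly flag, and it sits precisely where the paper's argument is itself unconvincing; carrying out either of your two suggested detections carefully would yield a proof that is actually more complete than the one in the paper.
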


\begin{proof}
  This time, the index computation shows that the invariant lives
  in the group~$[S^H,S^{3D}]^\PP$. The sphere~$S^H$ sits in a cofibre sequence
  \begin{displaymath}
    S(H)_+\longrightarrow S^0\longrightarrow S^H\longrightarrow\Sigma S(H)_+.
  \end{displaymath}
   It follows that there is an induced exact sequence
  \begin{displaymath}
    [S(H)_+,S^{3D}]^\PP\longleftarrow
    [S^0,S^{3D}]^\PP\longleftarrow
    [S^H,S^{3D}]^\PP\longleftarrow
    [\Sigma S(H)_+,S^{3D}]^\PP.
  \end{displaymath}
  Here, the unit sphere~$S(H)$ in~$H$ is a free,
  2-dimensional~$\PP$-CW-complex with orbit space the real projective
  plane~$\RR P^2$. Therefore, there are
  isomorphisms
  \begin{displaymath}
    [\Sigma^tS(H)_+,S^{3D}]\cong[\RR P^2_+,S^{3-t}],
  \end{displaymath}
  and these groups vanish for~\hbox{$t=0,1$}. As a consequence, the
  middle map in the exact sequence is an
  isomorphism~$[S^H,S^{3D}]^\PP\cong[S^0,S^{3D}]^\PP$, the latter
  group being isomorphic to the integers thanks to
  Proposition~\ref{prop:free group}. Under this isomorphism,
  both~$m(K3)$ and~$\eta$ are sent to~$\e(D)^3$.
\end{proof}

The relation~(\ref{eq:relation}) from the preceding corollary shows
that~$\eta$ is invertible if~$\e(D)$ is invertible, so that there is
an arrow
\begin{displaymath}
  [M,N]^\PP_{\eta}\longrightarrow[M,N]^\PP_{\e(D)}
\end{displaymath}
for all~$\PP$-spectra~$M$ and~$N$, which is the identity on
representatives. Therefore, it sends the~$K3$-stable invariant of a
real spin 4-manifold to its~$S^2\!\times\!S^2$-stable invariant:

\begin{theorem}\label{thm:K3determinesTorus} 
  If~$X$ is a real spin 4-manifold, the~$\PP$-equivariant~$K3$-stable
  Bauer--Furuta invariant~$m(X\#\infty K3)$ of~$X$ determines
  the~\hbox{$S^2\!\times\!S^2$}-stable
  invariant~$m(X\#\infty(S^2\!\times\!S^2))$.
\end{theorem}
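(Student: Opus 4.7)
The plan is to exploit the universal property of the two localisations and the relation~(\ref{eq:relation}) which was just established. Since the Bauer--Furuta invariants of $K3$ and of $S^2\!\times\!S^2$ have been identified as $\eta$ and $\e(D)$ respectively, Theorem~\ref{thm:stableinvariants} places the two invariants in question in the groups
\begin{displaymath}
  m(X\#\infty K3)\in[S^{\lambda(X)},S^0]^\PP_{\eta}
  \qquad\text{and}\qquad
  m(X\#\infty(S^2\!\times\!S^2))\in[S^{\lambda(X)},S^0]^\PP_{\e(D)}.
\end{displaymath}
Both of these are formed as the colimits defining the localisations, with $(k+1)$-st stage obtained from the $k$-th by smashing with $\eta$ or $\e(D)$ respectively; at each finite stage the representative is literally the (non-localised) Bauer--Furuta invariant $m(X\#kK3)$ or $m(X\#k(S^2\!\times\!S^2))$.

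First, I would observe that the universal property of localisation produces a well-defined functor between the two target categories. Indeed, by~(\ref{eq:relation}) we have $\e(D)^3=\eta\,\e(H)$, so once $\e(D)$ is inverted, $\e(D)^3$ is invertible, and since smashing is commutative up to canonical isomorphism in the stable homotopy category, this forces $\eta$ to be invertible as well (with inverse represented by $\e(H)\,\e(D)^{-3}$). Applied to identity maps, this gives the dashed arrow
\begin{displaymath}
  [M,N]^\PP_\eta\longrightarrow[M,N]^\PP_{\e(D)}
\end{displaymath}
that was indicated in the paragraph preceding the theorem, and by construction this map is the identity on representatives.

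Next I would verify that this comparison arrow carries the $K3$-stable invariant to the $S^2\!\times\!S^2$-stable invariant. Since the map is the identity on representatives, it is enough to show that the sequence $\{m(X\#kK3)\}_k$, viewed via smashing with suitable powers of $\e(H)$, becomes cofinal with the sequence $\{m(X\#k(S^2\!\times\!S^2))\}_k$ in the $\e(D)$-localised category. This is exactly what Bauer's connected sum theorem together with the identity $\e(D)^3=\eta\,\e(H)$ provides: smashing $m(X\#kK3)=m(X)\wedge\eta^k$ with $\e(H)^k$ yields $m(X)\wedge\e(D)^{3k}$, which represents the invariant of $X\#3k(S^2\!\times\!S^2)$ up to a unit in the localisation.

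I expect the only subtle point to be the bookkeeping in the last step — confirming that the cofinality statement really lands the $K3$-stable class on the $(S^2\!\times\!S^2)$-stable class (and not on some unit multiple) — but this is a routine check once one notes that the Euler class $\e(H)$ that appears is itself invertible nowhere but contributes only as a formal smash factor along a cofinal subsequence, so it cancels in the colimit defining the $\e(D)$-localisation. Everything else is a direct consequence of the relation $\eta\,\e(H)=\e(D)^3$ and the universal property, so no further input is needed.
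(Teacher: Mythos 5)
Your argument is correct and is essentially the paper's own: the relation $\eta\,\e(H)=\e(D)^3$ shows that $\eta$ becomes invertible once $\e(D)$ is inverted, which by the universal property of localisation yields the comparison map $[M,N]^\PP_\eta\rightarrow[M,N]^\PP_{\e(D)}$ that is the identity on representatives. Your closing cofinality check is harmless but superfluous, since both stable invariants are represented at stage zero by the very same class $m(X)$, so the comparison map tautologically carries the $K3$-stable invariant to the $S^2\!\times\!S^2$-stable one.
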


This may come as a surprise, in view of the fact that, while
algebraically the intersection form of $S^2\!\times\!S^2$ is an
orthogonal summand of that of~$K3$, geometrically~$S^2\!\times\!S^2$
is not a connected summand of
$K3$. See~\ref{subsec:K3I}.~above. Theorem~\ref{thm:K3determinesTorus}
would follow trivially from the existence of a connected summand
$S^2\!\times\!S^2$ in a connected sum of $K3$ surfaces; but the
hypothetical other summand would be a counterexample to the
$11/8$-conjecture.


\section*{Acknowledgements}

Thanks are due for the helpful comments of Stefan Bauer on an early draft, and
of the anonymous referee on the submitted manuscript.



\vfill

\parbox{\linewidth}{%
Markus Szymik\\
Department of Mathematical Sciences\\
NTNU Norwegian University of Science and Technology\\
7491 Trondheim\\
NORWAY\\
\href{mailto:markus.szymik@ntnu.no}{markus.szymik@ntnu.no}\\
\href{https://folk.ntnu.no/markussz}{folk.ntnu.no/markussz}}

\end{document}